\documentclass[12pt, a4paper, reqno]{amsart}

\usepackage{amsmath}
\usepackage{amsfonts}
\usepackage{amssymb}
\usepackage{amsthm}
\usepackage[latin1]{inputenc}
\usepackage{enumerate}



\theoremstyle{plain}
\newtheorem{theo}[equation]{Theorem}

\newtheorem{cor}[equation]{Corollary}
\newtheorem{corollary}[equation]{Corollary}
\newtheorem{lem}[equation]{Lemma}
\newtheorem{lemma}[equation]{Lemma}
\newtheorem{prop}[equation]{Proposition}

\theoremstyle{definition}
\newtheorem{defi}[equation]{Definition}

\numberwithin{equation}{section}

\def\kint_#1{\mathchoice%
          {\mathop{\kern 0.2em\vrule width 0.6em height 0.69678ex depth -0.58065ex
                  \kern -0.8em \intop}\nolimits_{\kern -0.4em#1}}%
          {\mathop{\kern 0.1em\vrule width 0.5em height 0.69678ex depth -0.60387ex
                  \kern -0.6em \intop}\nolimits_{#1}}%
          {\mathop{\kern 0.1em\vrule width 0.5em height 0.69678ex depth -0.60387ex
                  \kern -0.6em \intop}\nolimits_{#1}}%
          {\mathop{\kern 0.1em\vrule width 0.5em height 0.69678ex depth -0.60387ex
                  \kern -0.6em \intop}\nolimits_{#1}}}
\def\vintslides_#1{\mathchoice%
          {\mathop{\kern 0.1em\vrule width 0.5em height 0.697ex depth -0.581ex
                  \kern -0.6em \intop}\nolimits_{\kern -0.4em#1}}%
          {\mathop{\kern 0.1em\vrule width 0.3em height 0.697ex depth -0.604ex
                  \kern -0.4em \intop}\nolimits_{#1}}%
          {\mathop{\kern 0.1em\vrule width 0.3em height 0.697ex depth -0.604ex
                  \kern -0.4em \intop}\nolimits_{#1}}%
          {\mathop{\kern 0.1em\vrule width 0.3em height 0.697ex depth -0.604ex
                  \kern -0.4em \intop}\nolimits_{#1}}}

\newcommand{\vp}{\varphi}
\newcommand{\eps}{\varepsilon}

\newcommand{\R}{\mathbf{R}}
\newcommand{\Rn}{\R^n}

\newcommand{\dist}{\operatorname{dist}}
\newcommand{\norm}[1]{\left|\left| #1 \right|\right|}
\newcommand{\abs}[1]{\left| #1 \right|}

\renewcommand{\limsup}{\operatornamewithlimits{lim \, sup}}
\renewcommand{\liminf}{\operatornamewithlimits{lim \, inf}}

\newcommand{\essinf}{\operatornamewithlimits{ess\,inf}}

\newcommand{\supp}{\operatorname{spt}}
\newcommand{\spt}{\operatorname{spt}}

\newcommand{\parts}[2]{\frac{\partial {#1}}{\partial {#2}}}

\newcommand{\theoref}[1]{Theorem~\ref{#1}}

\newcommand{\lemref}[1]{Lemma~\ref{#1}}

\newcommand{\defiref}[1]{Definition~\ref{#1}}

\newcommand{\corref}[1]{Corollary~\ref{#1}}

\newcommand{\ud}{\,\mathrm{d}}

\newcommand{\trm}{\textrm}

\newcommand{\half}{{\frac{1}{2}}}

\newcommand{\ol}{\overline}

\def\XXint#1#2#3{{\setbox0=\hbox{$#1{#2#3}{\int}$}
     \vcenter{\hbox{$#2#3$}}\kern-.5\wd0}}

\newcommand{\Om}{\Omega}


\newcommand{\essliminf}{\operatornamewithlimits{ess\,lim\,inf}}


\title[Irregular obstacles]{Irregular Time
  Dependent Obstacles}

\author{Peter Lindqvist and Mikko Parviainen}

\address{Peter Lindqvist,
\hfill\break\indent Department of Mathematics \hfill\break\indent
Norwegian University of Science and Technology \hfill\break\indent N-7491, 
\hfill\break\indent Trondheim, Norway
\hfill\break\indent {\tt lqvist@math.ntnu.no}}

\address{Mikko Parviainen
\hfill\break\indent Institute of Mathematics \hfill\break\indent
Aalto University \hfill\break\indent P.O. Box
111100, FIN-00076 Aalto, \hfill\break\indent Espoo, Finland
\hfill\break\indent {\tt mikko.parviainen@tkk.fi}}

\date{\the\day.\the\month.\the\year}

\subjclass[2010]{35K55, 31B15, 31B05}

\keywords{Irregular obstacle, Lavrentiev phenomenon, least solution, parabolic obstacle problem, potential, p-parabolic, supersolution, variational inequalities}


\begin{document}
\begin{abstract} We study the obstacle problem for the Evolutionary p-Laplace
  Equation when the obstacle is discontinuous and without
  regularity in the time variable. Two quite different procedures
  yield the same solution.
\end{abstract}

\thispagestyle{headings}\maketitle

\section{Introduction}

Our objective is the obstacle problem for the Evolutionary $p$-Laplace
Equation in the slow diffusion case $p > 2.$ The appearing functions are forced to lie almost everywhere
above a given function, the \textsf{obstacle} $\psi.$ Our emphasis is on  very
irregular obstacles. Then some uniqueness and convergence results, known in the
stationary case, are no longer valid in the
parabolic theory. Thus some precaution is called for.

The weak solutions and weak supersolutions of the Evolutionary $p$-Laplace Equation
$$\frac{\partial u}{\partial t} = \mathrm{div}(|\nabla u|^{p-2}\nabla
u)$$
are \emph{a priori} required to belong to the Sobolev space
$L^p(0,T;W^{1,p}(\Omega)).$ Therefore
it is natural to treat the obstacle problem under the assumption that
the obstacle $\psi$ belongs to the same space. Needless to say, when
it comes to the basic theory, it is
very important that no
further assumptions be imposed on the obstacle. However, the
natural    \begin{equation*}
\text{\textsf{Assumption:}}\quad\psi \in L^p(0,T;W^{1,p}(\Omega))
\end{equation*}
does not include any requirements about the time derivative
$\frac{\partial \psi}{\partial t}.$ Neither must $\psi$ be continuous.
Indeed,  for instance rather irregular
dis\-con\-tinu\-ous functions of the type $\psi(x,t) = \psi(t)$ belong to
this space.  The variational problem is difficult
to handle under this general assumption. In the literature, so far as
we know, extra conditions about the ``missing'' time derivative or
other devices to control the time behavior are always present.  In the
present work, we carefully avoid such additional regularity
assumptions, but for convenience we
 require that the obstacle $\psi$ is bounded and
of compact support.

Given a general obstacle $\psi$,  belonging to the natural space
mentioned above, we will define the solution of
the obstacle problem in two different ways:
\begin{itemize}
\item{\textsf{the least solution} $w^*.$} This comes from the
 pointwise infimum of
  weak supersolutions lying above the obstacle almost everywhere.
\medskip
\item{\textsf{the variational solution} $v$.} The obstacle $\psi$ is
approximated by  time convolutions $\psi_{\varepsilon}$ and these act as
obstacles. The limit of the solutions of the approximating obstacle
problems is the variational solution $v.$
\end{itemize}
 We prove that the least solution and the variational solution
coincide (Theorem 4.10). Since $w^*$ is unique by its definition,
it follows that also the variational solution is unique. The
uniqueness of $v$ is, as it were, difficult to achieve without evoking
$w^*$. Furthermore, the variational inequality
\begin{gather}
\int_{0}^{T}\!\!\int_{\Omega}\Bigl(|\nabla v|^{p-2}\nabla v \cdot
\nabla(\phi - v) + (\phi - v)\frac{\partial \phi}{\partial
  t}\Bigr)\,dx\,dt \nonumber \\
\geq \frac{1}{2} \int_{\Omega}|\phi(x,T)-v(x,T)|^2dx
\end{gather}
holds for all \emph{smooth} $\phi,$\quad$\phi \geq \psi$ a.e.
 and $\phi = \psi$
 on the parabolic boundary\footnote{The reader may notice that,
   strictly speaking not
   even the obstacle $\psi$ itself, is always admissible as a test
   function in (1.1).}. The same holds for $w^*,$ since $v =
w^*.$ However,
 in the presence of an irregular obstacle, the above variational
 inequality also can have "false solutions": uniqueness fails at this
 level\footnote{A counterexample is presented in section 5}.
 Therefore the procedure with the convolutions $\psi_{\eps}$ is
 decisive;
the $\psi_{\eps}$'s capture the time behavior of their limit $\psi.$

We seize the opportunity to mention the celebrated Lavrentiev
phenomenon. If the obstacle $\psi$ is not upper semicontinuous, one
cannot always reach the least solution by using merely
\emph{continuous} weak supersolutions $u$ satisfying
$u \geq \psi.$ Neither can one  in the construction of the
variational solution, restrict oneself to approximants
satisfying $\psi_{j} \geq \psi$ almost everywhere. See section 5. This excludes
some  easy definitions.

We emphasize that this is not the theory about \emph{thin obstacles},
where the functions are forced to lie above the obstacle at
\emph{each} point. Our inequalities are
usually valid only
almost everywhere and no finer theory about capacities is used. ---It
has not escaped our notice that the results suggest a generalization
to other equations of the same structural type. Also the wider range
 $p > 2n/(n+2)$  of exponents could be included.

\section{Preliminaries}

We consider the domain
\[
\Om_T=\Om\times(0,T),
\]
where $\Om$ is a regular and bounded domain in $\R^n$, for example a
ball will do.
Its parabolic boundary is
\[
\partial_p \Om_T=(\ol \Om\times\{0\})\cup (\partial \Om\times[0,T]).
\]
Let
\[
B=B_R(x_0)=\{x\in \Rn\,:\, \abs{x-x_0}<R\}
\]
denote the ball of radius $r$ centered at $x$. The space-time cylinders
\[
\begin{split}
Q =Q_r(x,t)=B_r(x)\times(t-r^p,t+r^p).
\end{split}
\]
are convenient for some limit procedures.

  As usual, $W^{1,p}(\Om)$
denotes the Sobolev space of those real-valued functions $f$  that
together with their  distributional first partial derivatives $\partial
f /\partial x_i$, $i=1,2,\dots,n$, belong to
$L^p(\Om)$. We use the norm
\[
\|f\|_{W^{1,p}(\Om)} =\left(\int_\Om(|f|^p
+|\nabla f|^p)\,dx\right)^{1/p}.
\]
The Sobolev space  $W_0^{1,p}(\Om)$  with zero boundary values is
the closure of $C_0^\infty(\Om)$ with respect to the Sobolev norm.

The Sobolev space
\[
L^p(0,T;W^{1,p}(\Om)),
\]
consists  of all functions $u(x,t)$ such that $u(x,t)$ belongs to $W^{1,p}(\Om)$ for almost every $0< t <
T$,  $u(x,t)$ is measurable as a mapping from $(0,T)$ to $W^{1,p}(\Om)$, and the norm
\[
\left(\int\!\!\int_{\Om_T}( |u(x,t)|^{p} + |\nabla
u(x,t)|^{p})\,dx\,dt\right)^{1/p}
\]
is finite. The definition of the space
$L^p(0,T;W_{0}^{1,p}(\Om))$ is analogous.

\begin{defi}
 A function $u\in L^p_\trm{loc}(0,T;W^{1,p}_\trm{loc}(\Om))$
is a \emph{weak supersolution}  to the $p$-parabolic equation, if
\begin{equation}
\label{eq:weak-super}
\int\!\!\int_{\Om_T}\left(\abs{\nabla u}^{p-2} \nabla u \cdot \nabla
\vp - u \parts{\vp}{t}\right)\!\ud x \ud t \geq 0
\end{equation}
for every $\vp \in C_{0}^{\infty}({\Om_T}),\, \vp \geq 0$. It is a
\emph{weak subsolution}, if the integral is non-positive. A function $u$ is a
\emph{weak solution} if it is both a super- and  a subsolution, that is,
\begin{equation}
\label{eq:weak-sol}
\int\!\!\int_{\Om_T}\left(\abs{\nabla u}^{p-2} \nabla u \cdot \nabla
\vp - u \parts{\vp}{t}\right)\!\ud x \ud t=0
\end{equation}
for every $\vp \in C_{0}^{\infty}({\Om_T})$.
\end{defi}

By parabolic regularity theory,
 a continuous representative of a weak solution  always exists.  It is here
 called a \emph{$p$-parabolic function}. For the theory of weak solutions
 the reader may
 consult  \cite{dibenedetto93} and \cite{wuzyl01}.

 We shall use the regularizations
\[
\begin{split}
w^*(x,t)=\essliminf_{(y,s)\to(x,t)} w(y,s)= \lim_{r \to 0}
(\essinf_{Q_r(x,t)\cap \Om_T }w)
\end{split}
\]
and
\[
\begin{split}
\hat w(x,t)=\liminf_{(y,s)\to (x,t)}w(y,s)=\lim_{r \to 0}
(\inf_{Q_r(x,t)} w).
\end{split}
\]
Both are lower semicontinuous.

The lower semicontinuity of $w^*$ follows from the definition
in a straightforward manner: Fix $(x,t)\in \Om_T$. Then for every $\eps>0$, we may choose a radius $r>0$ such that $Q_r(x,t)\subset \Om_T$ and
\[
\abs{ w^*(x,t) -\essinf_{Q_r(x,t)} w}
\leq \eps.
\]
Choose $(y,s)\in Q_r(x,t)$ and observe that for all small enough $\rho>0$, we have $Q_\rho(y,s)\subset
Q_r(x,t)$. Thus,
\[
w^*(y,s) \geq \essinf_{Q_r(x,t)} w\geq w^*(x,t)
-\eps.
\]
Since $\eps>0$ was arbitrary, this leads to
\[
\liminf_{(y,s)\to (x,t)}w^*(y,s)\geq w^*(x,t)
,
 \]
which proves the assertion. The proof at the boundary is analogous.
\normalsize

According to \cite{kuusi09} the $\essliminf$-regularization of a weak
supersolution coincides with the original function almost everywhere,
and thus \emph{every weak supersolution has a lower semicontinuous
  representative.}

Let us now introduce the \emph{obstacle} $\psi$. In this section it is
only assumed to be a measurable function satisfying the inequality
 $0\le \psi \le L$ in $\Omega_T.$
\begin{defi}
\label{def:irregular-obstacle}
Let $\psi$ be the obstacle and consider the class
\[
\begin{split}
\mathcal S_{\psi}=\{u\,:\, u&\textrm{ is $\essliminf$-regularized weak supersolution},\\
&\hspace{14 em}u\geq\psi \,\, \textrm{a.e.\ in}\ \Omega_{T}\}.
\end{split}
\]
Define the function
\[
w(x,t)=\inf_{u} u(x,t),
\]
where the infimum is taken over the whole class $\mathcal
S_{\psi}$. We say that its regularization  $w^*(x,t)$ is  the \emph{least solution} to the obstacle
problem\footnote{In Potential Theory, $w^*$ is often called the \emph{balayage}.}.
\end{defi}

  The least solution always exists and is unique.
If $u_1, u_2 \in \mathcal
S_{\psi}$, then also their pointwise minimum
$\min\{u_1, u_2\}$ belongs to $\mathcal S_{\psi}$, cf.\ for example
 Lemma~3.2.\ in \cite{kortekp10}.
 Therefore Choquet's
well known topological lemma is applicable.
\begin{lem}[Choquet]
\label{lem:choquet}
Let $w$ be as above. There exists a decreasing sequence of functions in $\mathcal{S}_\psi$ converging pointwise to a function $u$ such that
\[
\begin{split}
\hat  u(x,t)=\hat w(x,t)
\end{split}
\]
at every point in $\Om_T$.
\end{lem}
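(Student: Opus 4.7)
The plan is to run Choquet's classical topological argument in this parabolic setting. I fix a countable base $\{V_j\}_{j=1}^{\infty}$ of the (relative) topology of $\Om_T$, for instance intrinsic cylinders $Q_r(y,s)\cap\Om_T$ with rational data. For each index $j$ the identity
\[
\inf_{V_j} w \;=\; \inf_{u\in\mathcal S_\psi}\,\inf_{V_j} u
\]
lets me select $u_{j,n}\in\mathcal S_\psi$ with $\inf_{V_j} u_{j,n}<\inf_{V_j} w+1/n$.

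Next, I enumerate all of the $u_{j,n}$'s as a single sequence $w_1,w_2,\dots$ and put
\[
v_k \;:=\; \bigl(\min\{w_1,\dots,w_k\}\bigr)^{*},
\]
the $\essliminf$-regularization of the running pointwise minimum. By Lemma~3.2 of \cite{kortekp10} the pointwise minimum of two weak supersolutions is again a weak supersolution that lies above $\psi$ almost everywhere, so after regularization $v_k\in\mathcal S_\psi$. The sequence is obviously decreasing; set $u:=\lim_{k\to\infty} v_k$ pointwise.

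It remains to verify $\hat u=\hat w$. On the one hand every $v_k\in\mathcal S_\psi$ satisfies $v_k\geq w$ pointwise by the very definition of $w$, so $u\geq w$ pointwise and hence $\hat u\geq\hat w$. On the other hand, since each $w_k$ is lower semicontinuous, $\inf_{Q_r} w_k=\essinf_{Q_r} w_k$ and consequently $v_k\leq w_k$ pointwise; thus $u\leq v_k\leq u_{j,n}$ pointwise whenever $k$ exceeds the index of $u_{j,n}$ in the enumeration. Consequently
\[
\inf_{V_j} u \;\leq\; \inf_{V_j} u_{j,n} \;<\; \inf_{V_j} w + \tfrac{1}{n}
\]
for every $n$, so $\inf_{V_j} u=\inf_{V_j} w$. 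Picking a subsequence $V_{j_m}$ that shrinks to an arbitrary point $(x,t)$ through its neighborhood base finally gives $\hat u(x,t)=\lim_m\inf_{V_{j_m}} u=\lim_m\inf_{V_{j_m}} w=\hat w(x,t)$.

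The genuine technical content is that the $\essliminf$-regularization of the pointwise minimum remains inside $\mathcal S_\psi$; this is precisely the content of Lemma~3.2 of \cite{kortekp10} together with the discussion of regularizations from \cite{kuusi09}. A minor but essential observation, which I expect to be the only subtle point, is that lower semicontinuity forces $\bigl(\min\{w_1,\dots,w_k\}\bigr)^{*}\leq w_k$ pointwise, so the regularization step does not spoil the comparison against the individual $u_{j,n}$'s. Everything else in the argument is purely combinatorial.
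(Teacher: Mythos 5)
Your argument is correct and is precisely the standard proof of Choquet's topological lemma that the paper invokes by citation, resting on the same single ingredient, namely that $\mathcal S_{\psi}$ is closed under pointwise minima (Lemma~3.2 of \cite{kortekp10}), followed by the usual countable-base selection and diagonal minimization. One small repair: the identity $\inf_{V_j}w_k=\essinf_{V_j}w_k$, which you use to get $v_k\le w_k$, does not follow from lower semicontinuity alone (a lower semicontinuous function may dip below its essential infimum on a null set); it holds here because members of $\mathcal S_{\psi}$ are $\essliminf$-regularized, so one argues directly $v_k=\essliminf \min\{w_1,\dots,w_k\}\le \essliminf w_k=w_k$, and with that justification the rest of your proof goes through unchanged.
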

Next we recall Theorem~4.3 from \cite{kinnunenlp10}, based on
Theorem~6 in  \cite{lindqvistm07}, \cite{simon87}, and Theorem~5.3.\ in \cite{kortekp10}.  An essential
ingredient in the proof is  that  a Radon measure is assigned to every weak supersolution.

\begin{theo}\label{thm:superparab-compactness}
  Let $u_i$ be a bounded sequence of weak supersolutions in $\Om_T$.  Then there exist a weak supersolution $u$ and a subsequence, still denoted by $u_i$, such that
  \[
  u_i\to u , \quad
  \nabla u_i\to \nabla u \quad \textrm{a.e.\
    in}\quad \Om_T.\]
\end{theo}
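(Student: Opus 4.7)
The plan is to proceed in four stages, exploiting the measure-representation of supersolutions that is explicitly flagged in the excerpt.

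First, to each bounded weak supersolution $u_i$ I would associate a nonnegative Radon measure $\mu_i$ on $\Om_T$ such that
\[
\int\!\!\int_{\Om_T}\!\l(|\nabla u_i|^{p-2}\nabla u_i \cdot \nabla \vp - u_i \parts{\vp}{t}\r)\ud x\ud t \,=\, \int\!\!\int_{\Om_T}\vp\,\ud\mu_i
\]
for every $\vp \in C_0^\infty(\Om_T)$. This is a Riesz-type representation: the supersolution inequality says that the left-hand side is a nonnegative distribution of order zero. A standard Caccioppoli estimate (with a cutoff that is constant on compact subsets) then bounds $\nabla u_i$ in $L^p_\trm{loc}(\Om_T)$ and the total mass $\mu_i(K)$ uniformly on each compact $K\subset\Om_T$, using only $\|u_i\|_\infty\le M$.

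Second, I would extract a subsequence along which $u_i\rightharpoonup u$ weakly in $L^p_\trm{loc}(0,T;W^{1,p}_\trm{loc}(\Om))$ and $\mu_i\rightharpoonup^*\mu$ as Radon measures. To upgrade weak convergence to a.e. convergence of the functions themselves I would invoke a parabolic Aubin--Simon compactness lemma (this is the role of \cite{simon87}): since $\partial_t u_i = \mu_i + \divt(|\nabla u_i|^{p-2}\nabla u_i)$ is bounded in a suitable negative-order space and $u_i$ is bounded in $L^p_\trm{loc}(0,T;W^{1,p}_\trm{loc}(\Om))$, one obtains $u_i\to u$ strongly in $L^p_\trm{loc}$ and, up to a further subsequence, a.e.\ in $\Om_T$.

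The main obstacle is the a.e.\ convergence of the gradients, and this is where the measure $\mu_i$ is indispensable. The strategy is the standard monotonicity trick for the $p$-Laplacian: test the equation satisfied by $u_i-u_j$ against $\zeta\,(u_i-u_j)_k$, where $\zeta\in C_0^\infty(\Om_T)$ is a cutoff and $(\cdot)_k$ denotes a Steklov average to handle the missing time derivative. After standard manipulations the time-derivative piece is nonnegative and absorbable, the measure piece contributes $\int\!\!\int \zeta(u_i-u_j)(\ud\mu_i - \ud\mu_j)$ which tends to zero because $u_i-u_j\to 0$ uniformly in $L^\infty$-strong on the support of $\zeta$ (by the previous step) while $\mu_i+\mu_j$ has bounded total variation there, and one arrives at
\[
\int\!\!\int_{\Om_T}\zeta\bigl(|\nabla u_i|^{p-2}\nabla u_i-|\nabla u_j|^{p-2}\nabla u_j\bigr)\cdot\nabla(u_i-u_j)\ud x\ud t \;\longrightarrow\; 0.
\]
The elementary vector inequality $(|a|^{p-2}a-|b|^{p-2}b)\cdot(a-b)\ge c\,|a-b|^p$ (valid for $p\ge 2$) then forces $\nabla u_i\to\nabla u$ in $L^p_\trm{loc}$, and along a further subsequence a.e.\ in $\Om_T$.

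Finally, having a.e.\ convergence of both $u_i$ and $\nabla u_i$, the nonlinearity $|\nabla u_i|^{p-2}\nabla u_i$ converges a.e.\ and is equi-integrable (by the $L^p_\trm{loc}$ bound), so Vitali's theorem lets me pass to the limit in \eqref{eq:weak-super}, concluding that the limit $u$ is itself a weak supersolution. The delicate point in this whole scheme is the Steklov-average manipulation in step three, which must be carried out carefully because neither $u_i$ nor $u$ possesses a distributional time derivative in $L^{p'}$; this is where one really uses that supersolutions are characterized by a signed measure rather than by a classical time derivative.
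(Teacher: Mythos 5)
Your overall architecture is the right one, and it matches the strategy behind the result the paper merely cites (Theorem~4.3 of \cite{kinnunenlp10}): the Riesz measure of a supersolution, local Caccioppoli bounds from $0\le u_i\le M$, Simon's compactness \cite{simon87} for a.e.\ convergence of the functions, a monotonicity argument for the gradients, and Vitali to pass to the limit in \eqref{eq:weak-super}. But the decisive third step has a genuine gap as written. You dispose of the measure contribution $\int\!\!\int_{\Om_T}\zeta\,(u_i-u_j)\,(\ud\mu_i-\ud\mu_j)$ by claiming that $u_i-u_j\to 0$ ``uniformly in $L^\infty$-strong'' on $\spt\zeta$. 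The previous step gives you only strong $L^p_\trm{loc}$ and a.e.\ convergence; bounded supersolutions are merely lower semicontinuous and nothing forces uniform convergence. Worse, the measures $\mu_i$ may be singular with respect to Lebesgue measure (the typical example is a measure concentrated on a time slice or a point, as for the Barenblatt solution), so they can charge exactly the Lebesgue-null sets on which a.e.\ convergence carries no information. Consequently the pairing of $u_i-u_j$ against $\mu_i-\mu_j$ is not controlled by anything you have established, and for the same reason the admissibility of the non-smooth test function $\zeta\,(u_i-u_j)_k$ in a formulation with a (possibly singular) measure on the right-hand side is itself problematic.

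The standard repair, and essentially what the cited proof does, is to avoid ever letting $u_i-u_j$ meet the measures at full strength: test with $\zeta\,T_\delta(u_i-u_j)$, where $T_\delta$ truncates at level $\delta$ (Steklov or time mollification as you propose, with the convex primitive of $T_\delta$ taking care of the time term). Then the measure contribution is bounded by $\delta\,\bigl(\mu_i(\spt\zeta)+\mu_j(\spt\zeta)\bigr)\le C\delta$ uniformly in $i,j$, with no convergence against the measures required, while the monotonicity term gives smallness of $\bigl(|\nabla u_i|^{p-2}\nabla u_i-|\nabla u_j|^{p-2}\nabla u_j\bigr)\cdot\nabla(u_i-u_j)$ on the set $\{|u_i-u_j|<\delta\}$; since $|\{|u_i-u_j|\ge\delta\}|\to 0$ by the $L^p_\trm{loc}$ convergence, one concludes in the style of Boccardo--Gallou\"et (cf.\ \cite{boccardodgo97}) that $\nabla u_i$ is Cauchy in measure on compact subsets, hence a.e.\ convergent along a subsequence. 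With that substitution your first, second and fourth steps go through and the proof is complete; without it, the argument does not close, because the inequality $(|a|^{p-2}a-|b|^{p-2}b)\cdot(a-b)\ge c|a-b|^p$ is only as good as the smallness of the left-hand side, which your treatment of the measure term does not deliver.
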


In \lemref{thm:obstacle-sol-supersol}, we  will show that the least
solution $w^*$ to the obstacle problem is a weak supersolution. The proof
is based on Choquet's   lemma and the above convergence result. Since
Choquet's  lemma is formulated for $\liminf$-regularizations, while the
 definition of a least solution uses the $\essliminf$-regularization,
 we  show that for the infimum $w$ these coincide.
\begin{lem}
\label{lem:regularizations-coincide}
For the least solution it holds everywhere that
\[
\begin{split}
w^*=\hat w.
\end{split}
\]
\end{lem}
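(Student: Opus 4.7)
The plan is to prove the two inequalities $w^* \geq \hat{w}$ and $w^* \leq \hat{w}$ separately.

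The inequality $w^* \geq \hat{w}$ is immediate from the general fact that $\essinf_{Q_r(x,t)} w \geq \inf_{Q_r(x,t)} w$ for every $r>0$ (the essential infimum ignores null sets that could only lower the ordinary infimum). Passing to the limit $r \to 0$ gives the inequality pointwise.

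For the reverse inequality I would use two observations. First, every $u \in \mathcal S_\psi$ is by definition $\essliminf$-regularized and therefore lower semicontinuous; in particular $u = u^*$ and $u = \hat u$. Since $w \leq u$ pointwise by definition of the infimum, I get $\essinf_{Q_r(x,t)} w \leq \essinf_{Q_r(x,t)} u$, and letting $r\to 0$ yields $w^*(x,t) \leq u^*(x,t) = u(x,t)$ for \emph{every} $u \in \mathcal S_\psi$. Next I would invoke Choquet's lemma (\lemref{lem:choquet}) to produce a decreasing sequence $u_i \in \mathcal S_\psi$ with pointwise limit $u_\infty$ satisfying $\hat u_\infty = \hat w$ everywhere in $\Om_T$. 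The previous step gives $w^* \leq u_i$ for all $i$, hence $w^* \leq u_\infty$ pointwise. Since $w^*$ is itself lower semicontinuous (this is established right after the definition of the regularizations), we have $\hat{w^*} = w^*$, and therefore
\[
w^*(x,t) \;=\; \liminf_{(y,s)\to(x,t)} w^*(y,s) \;\leq\; \liminf_{(y,s)\to(x,t)} u_\infty(y,s) \;=\; \hat u_\infty(x,t) \;=\; \hat w(x,t).
\]

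The main subtlety is bookkeeping between the two regularizations: Choquet's lemma delivers information about the $\liminf$-regularization $\hat u_\infty$, while the least solution is built from the $\essliminf$-regularization. The bridge is that for members of $\mathcal S_\psi$ the two notions coincide, since an $\essliminf$-regularized function is lower semicontinuous. No deeper property of weak supersolutions is needed here; the argument is purely a manipulation of regularizations combined with Choquet's selection principle.
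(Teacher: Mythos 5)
Your proof is correct and follows essentially the same route as the paper's: both hinge on the trivial inequality $\essinf \geq \inf$ for one direction, on the observation that $w^* \leq u^* = u$ for every $\essliminf$-regularized $u \in \mathcal S_\psi$ (hence $w^* \leq w$), and on the lower semicontinuity of $w^*$. The only difference is your detour through Choquet's lemma to produce $u_\infty$ with $\hat u_\infty = \hat w$; this is valid but superfluous, since $w^* \leq w$ already yields $w^* = \widehat{w^*} \leq \hat w$ directly, which is exactly how the paper concludes.
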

\begin{proof}
Clearly $\hat w\leq w^*$, and it remains to show that $w^*\leq \hat
w$. First, notice that $w^* \leq w$. Indeed,
$$w^* = \essliminf w \leq \essliminf u = u$$
for each admissible $\essliminf$-regularized $u$, hence $w^* \leq
\inf \{u\} = w.$ Using this and the semicontinuity of  $w^*$, we obtain
$$\hspace{7.5 em}w^* \leq \liminf w^* \leq \liminf w = \hat w.\hspace{7.5 em}
\qedhere$$
\end{proof}

\begin{theo} \label{thm:obstacle-sol-supersol}
The least solution $w^*$ with the obstacle $\psi$ is a weak supersolution. Furthermore, $w=w^*$ almost everywhere.
\end{theo}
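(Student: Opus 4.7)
The plan is to extract a concrete weak supersolution representative of $w$ via Choquet's lemma and the compactness result, then show it coincides with $w^*$ everywhere.

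First I would replace each admissible $u \in \mathcal S_\psi$ by $\min\{u,L\}$: this is still in $\mathcal S_\psi$ (constants are weak supersolutions and the pointwise minimum of two weak supersolutions is a weak supersolution), and it does not alter the infimum $w$, so we may assume the class is uniformly bounded by $L$. Then \lemref{lem:choquet} produces a decreasing sequence $u_i \in \mathcal S_\psi$ with pointwise limit $\tilde u$ satisfying $\hat{\tilde u}=\hat w$. The uniform bound permits the application of \theoref{thm:superparab-compactness}, which, along a subsequence, yields $u_i \to u^\sharp$ and $\nabla u_i \to \nabla u^\sharp$ a.e., with $u^\sharp$ a weak supersolution. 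Since $u_i \to \tilde u$ everywhere, $u^\sharp = \tilde u$ almost everywhere; hence the $\essliminf$-regularization $v = \essliminf \tilde u$ (given by Kuusi's result) is a lower semicontinuous weak supersolution agreeing with $\tilde u$ a.e. As $\tilde u \geq \psi$ a.e., also $v \geq \psi$ a.e., so $v \in \mathcal S_\psi$.

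Next I want to pin $v$ to $w^*$ from both sides. On the one hand, $v \in \mathcal S_\psi$ gives $v \geq w$ pointwise, and since $v$ is its own $\essliminf$-regularization,
\[
v = \essliminf v \geq \essliminf w = w^*.
\]
On the other hand, $v = \tilde u \leq u_i$ almost everywhere, and since both $v$ and $u_i$ are lower semicontinuous, this upgrades to $v \leq u_i$ everywhere; taking the infimum in $i$ gives $v \leq \tilde u$ pointwise. Using the lower semicontinuity of $v$ once more,
\[
v(x,t) \leq \liminf_{(y,s)\to(x,t)} v(y,s) \leq \liminf_{(y,s)\to(x,t)} \tilde u(y,s) = \hat{\tilde u}(x,t) = \hat w(x,t) = w^*(x,t),
\]
the last equality being \lemref{lem:regularizations-coincide}. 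Combining the two inequalities yields $v = w^*$ everywhere, so $w^*$ is a weak supersolution. The second assertion is then immediate: $w^* = v = \tilde u$ a.e., $\tilde u \geq w$ pointwise (being the decreasing limit of elements of $\mathcal S_\psi$, each $\geq w$), and $w^* \leq w$ everywhere, forcing $w = w^*$ a.e.

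The routine inputs (truncation, Choquet, compactness, Kuusi's regularization) are stated above; the step that requires care is the sandwich $w^* \leq v \leq w^*$ in the second paragraph, specifically the upgrade from $v \leq u_i$ a.e. to pointwise using lower semicontinuity of both sides, and then the lower semicontinuity argument coupling $v \leq \tilde u$ pointwise with $\hat{\tilde u} = \hat w$. This is where the precise choice of the $\essliminf$-regularization — rather than some other representative — is used essentially, since otherwise one could not convert a.e.\ comparisons among lower semicontinuous functions into pointwise ones.
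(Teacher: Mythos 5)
Your argument is correct and follows essentially the same route as the paper: Choquet's lemma, the compactness theorem, Kuusi's regularization result, and the argument of \lemref{lem:regularizations-coincide} rerun for the Choquet limit; your explicit truncation at $L$ merely makes precise the boundedness needed to invoke \theoref{thm:superparab-compactness}, which the paper leaves implicit. One phrase should be repaired: the upgrade of $v\leq u_i$ from almost everywhere to everywhere is not a consequence of lower semicontinuity alone (an a.e.\ inequality between lower semicontinuous functions need not hold at every point), but of both sides coinciding with their $\essliminf$-regularizations, as you yourself indicate at the end --- or the step can be skipped altogether, since $\tilde u\leq u_i$ holds at every point by monotonicity, whence $v=\essliminf\tilde u\leq \essliminf u_i=u_i$ directly.
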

\begin{proof}
By \lemref{lem:choquet}, there exists a decreasing sequence in
$\mathcal S_{\psi}$ converging to a function $u$ so that
\[
\begin{split}
\hat  u(x,t)=\hat w(x,t)
\end{split}
\]
at each point.
By Theorem~\ref{thm:superparab-compactness} one can pass to the limit
under the integral sign in (\ref{eq:weak-super}), whence the limit
$u$ is a weak supersolution. It follows that
\[
\begin{split}
u^*=u
\end{split}
\]
almost everywhere. The proof of \lemref{lem:regularizations-coincide} also applies to $u$ and thus, $\hat u=u^*$ and $\hat w=w^*$.
Clearly, $u\geq w$. It follows that
\[
\begin{split}
\hat w=\hat u= u^*=u\geq w\geq \hat w
\end{split}
\]
almost everywhere, and since $w^*=\hat w$, this implies that $w=w^*$ almost everywhere.
\end{proof}

\section{Continuous obstacles}

In this section we  consider
\emph{continuous} obstacles.
However, we do not assume that the obstacle has a time derivative.

 We prove that if the obstacle is continuous, so is $w^*$, and that
 $w^*$ is even $p$-parabolic  in the set where  the obstacle does not
 hinder. For  the elliptic case, see
 \cite{kilpelainen89}. In the proof, we use a so-called Poisson modification.
\begin{defi}
Let  $Q\Subset \Om_T$ and let $w$ be a bounded and
$\essliminf$-regularized
 supersolution. We define its  \emph{Poisson modification} with respect to $Q$ as
\[
\begin{split}
w_P(x,t)=\begin{cases}
w,& \trm{in}\quad \Om_T\setminus  Q\\
v,& \trm{in}\quad Q,
\end{cases}
\end{split}
\]
where
\[
\begin{split}
v(\xi)=\sup\{h(\xi)\,:\,h\in C(\ol Q) \trm{ is }p\trm{-parabolic and }h\leq w\trm{ on }\partial_p Q\}.
\end{split}
\]
\end{defi}
As shown in Section~4.6.\ in \cite{kilpelainenl96}, $w_P$ is
$p$-parabolic in $Q$. Obviously, $w_P$ is lower
semicontinuous. Always,  $w_P \leq w$ by the Comparison Principle.

\begin{theo}
\label{thm:continuous}
 Let $\psi \in C(\ol \Om_T)$. The least solution $w^*$  with the obstacle $\psi$ is
continuous up to the boundary, and $w^*=\psi$ at $\partial_p \Om_T$. Moreover, $w^*$ is $p$-parabolic in the open set
$\{w^*>\psi\}$.
\end{theo}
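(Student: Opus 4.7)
The plan is to establish three assertions in turn: (a) $w^*$ is $p$-parabolic on the open set $\{w^*>\psi\}$, (b) $w^*$ is continuous on $\ol{\Om_T}$, and (c) $w^*=\psi$ on the parabolic boundary. The central tool is the Poisson modification from the preceding definition, which lets me force local $p$-parabolicity; the lower semicontinuity of $w^*$ already supplied by \lemref{lem:regularizations-coincide} combines with the continuity of $\psi$ to place the modifications inside $\mathcal S_\psi$. For contact points and for the boundary, I also need an approximation of $\psi$ from above by smoother obstacles.

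For (a), fix $(x_0,t_0)$ with $w^*(x_0,t_0)>\psi(x_0,t_0)$ and set $\delta=\tfrac12\bigl(w^*(x_0,t_0)-\psi(x_0,t_0)\bigr)$. By the lsc of $w^*$ and the continuity of $\psi$, I can find a cylinder $Q$ with $\ol Q\Subset\Om_T$ around $(x_0,t_0)$ on which $w^*\geq\psi(x_0,t_0)+\delta$ and $\psi\leq\psi(x_0,t_0)+\delta/2$. Let $v$ be the $p$-parabolic interior of the Poisson modification of $w^*$ with respect to $Q$. The constant $\psi(x_0,t_0)+\delta$ is a $p$-parabolic competitor that is $\leq w^*$ on $\partial_p Q$, hence admissible in the Perron family defining $v$; this gives $v\geq\psi(x_0,t_0)+\delta>\psi$ throughout $Q$. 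Outside $Q$, $w^*_P=w^*\geq\psi$. Therefore $w^*_P\in\mathcal S_\psi$. Parabolic comparison of the $p$-parabolic $v$ against the supersolution $w^*$ yields $w^*_P\leq w^*$, while the infimum definition gives $w^*_P\geq w^*$. Thus $w^*=w^*_P$ is $p$-parabolic in $Q$, and since $(x_0,t_0)$ was arbitrary, (a) follows.

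For (b), first note that $w^*\geq\psi$ pointwise: $w\geq\psi$ a.e.\ and $\psi$ continuous give $w^*=\essliminf w\geq\psi$ everywhere. Continuity on $\{w^*>\psi\}$ is immediate from (a). At a contact point $(x_0,t_0)$ the lsc supplies the lower bound, and for the upper bound I approximate $\psi$ uniformly from above by smoother obstacles $\psi_j\searrow\psi$ having $\partial_t\psi_j\in L^p$ (time convolutions plus additive $\eps_j\to 0$). For such regular obstacles the associated least solutions $w_j^*$ are continuous by classical theory. Since $\mathcal S_{\psi_j}\subset\mathcal S_\psi$ one has $w_j^*\geq w^*$; conversely, for any $u\in\mathcal S_\psi$ the translate $u+\|\psi_j-\psi\|_\infty$ belongs to $\mathcal S_{\psi_j}$, forcing $w_j^*\leq u+\|\psi_j-\psi\|_\infty$ and hence $\lim_j w_j^*\leq w$ pointwise. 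The decreasing limit of continuous functions is upper semicontinuous and coincides with $w^*$ off a null set; combining this with the lsc of $w^*$ delivers continuity. Part (c) then exploits the compact support of $\psi$: near $\partial_p\Om_T$ one has $\psi\equiv 0$, so $w^*\geq\psi=0$ there, and standard barrier constructions using the regularity of $\Om$ produce continuous supersolutions above $\psi$ that vanish on $\partial_p\Om_T$, pinning $w^*$ to $0$ at the boundary.

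The principal obstacle is the upper semicontinuity at contact points. The infimum definition of $w^*$ naturally supplies lsc but no local upper bound, and $\psi$ itself is not a supersolution and cannot bound $w^*$ from above. The smooth approximation $\psi_j\searrow\psi$ is decisive precisely because it produces a family of \emph{continuous} near-minimal supersolutions; the delicate point inside that step is the justification of $\lim_j w_j^*\leq w^*$ via the translate trick and the subsequent passage from a.e.\ agreement with $w^*$ to pointwise continuity, which is where I expect most of the technical care to be required.
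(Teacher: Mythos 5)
Your part (a) is essentially the paper's own argument for $p$-parabolicity on $\{w^*>\psi\}$ (Poisson modification plus comparison and minimality), and it is fine. The genuine gaps are in (b) and (c). In (b), at contact points you lean on the claim that the least solutions $w_j^*$ for the regularized obstacles $\psi_j$ are ``continuous by classical theory.'' That is not available: the classical results you have in mind (Alt--Luckhaus, B\"ogelein--Duzaar--Mingione) give continuity of \emph{variational} solutions, while the identification of the variational solution with the \emph{least} solution for regular obstacles is exactly what this paper proves later (Corollary~\ref{cor:smooth-variational=least}), and that identification rests on Theorem~\ref{thm:from-potential-to-variational}, which in turn uses Theorem~\ref{thm:continuous} itself; so your reduction is circular in any self-contained treatment. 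Moreover, even granting continuity of $w_j^*$, your concluding step is incomplete: from $w_j^*\le u+\|\psi_j-\psi\|_\infty$ you only obtain $\lim_j w_j^*\le w$ pointwise, and $w=w^*$ merely almost everywhere, so the chain $\limsup w^*\le \lim_j w_j^*(x_0,t_0)\le w(x_0,t_0)$ does not produce the needed bound by $w^*(x_0,t_0)$ at the given point; you flag this but do not close it (it could be closed by applying $\essliminf$ at the point to $w_j^*\le w+c_j$, but only once the continuity of $w_j^*$ is already secured). The paper avoids all of this with a single, uniform interior argument: at \emph{every} interior point, take the Poisson modification $w_P$ of $\hat w$ in a small cylinder and note that $w_P+\eps$ is admissible in $\mathcal S_\psi$ (lower semicontinuity of $\hat w$ and continuity of $\psi$ give $w_P+\eps\ge\psi$ first in the cylinder, hence everywhere), so $\hat w\le w_P+\eps$, and upper semicontinuity follows from the continuity of $w_P$ at the point together with $w_P\le\hat w$; no splitting into contact and non-contact points and no approximation of the obstacle is needed.

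In (c) you ``exploit the compact support of $\psi$,'' but Theorem~\ref{thm:continuous} assumes only $\psi\in C(\ol\Om_T)$: the assertion $w^*=\psi$ on $\partial_p\Om_T$ concerns general continuous boundary data, so an argument that pins $w^*$ to $0$ near the lateral boundary does not cover the statement (compact support enters only in later results). The paper instead constructs, at each boundary point $(x_0,t_0)$, the barrier $\eps+\psi(x_0,t_0)+\lambda f$ from an exterior-ball supersolution $f$ vanishing only at that point, verifies that this is an admissible supersolution above $\psi$ on $\ol\Om_T$ (a delicate check, since supersolutions cannot in general be multiplied by constants), and squeezes $\psi\le w^*\le \eps+\psi(x_0,t_0)+\lambda f$ to get both continuity up to the boundary and the boundary values.
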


\begin{proof}
Since $w^*=\hat w$, we can work with $\hat w$. Since $\hat w$ is lower semicontinuous, it remains to show that $\hat w$ is upper semicontinuous. To establish this, fix $(x_0,t_0)\in
\Om_T$ and observe that by the lower semicontinuity of
$\hat w$ and the continuity of $\psi$, there exists a cylinder  $Q=Q(x_0,t_0)\Subset \Om_T$ such that
\[
\hat w+\eps\geq \psi(x_0,t_0)+\frac{\eps}{2}\geq \psi\quad \trm{on}\quad \ol Q.
\]
Notice also that $\hat w+\eps$ is a supersolution. Let $w_P$ be the Poisson modification of
$\hat w$ in $Q$.
Since $w_P + \eps$ is $p$-parabolic in $Q$ and
$w_P + \eps\geq \psi(x_0,t_0)+\frac{\eps}{2}$ at $\partial_p Q$, it follows by comparison  that
\[
w_P + \eps \geq \psi(x_0,t_0)+\frac{\eps}{2}\geq \psi\quad \trm{in}\quad  Q,
\]
and hence,
\[
\begin{split}
w_P + \eps \geq\psi\quad  \trm{in}\quad \Om_T.
\end{split}
\]
Thus $w_P + \eps$ an admissible test
function in $\mathcal{S}_\psi$. This implies that
\[
\hat w\leq w_P + \eps
\]
in $\Omega_T$.
Hence
\[
\begin{split}
\limsup_{(y,s)\to (x_0,t_0)}\hat w(y,s)&\leq \lim_{(y,s)\to (x_0,t_0)}
w_P(y,s) + \eps \\
&=w_P(x_0,t_0) + \eps \leq \hat w(x_0,t_0)+\eps.
\end{split}
\]

Since $\eps>0$ was arbitrary, this shows that $\hat w$ is upper semicontinuous
at $(x_0,t_0)$ and, as it  is also lower semicontinuous, it is continuous
at the point $(x_0,t_0)$.

To see that $w^*$ is continuous up to the boundary, we use a barrier
argument as in \cite{kilpelainenl96}.  Let $(x_0,t_0)\in \partial_p \Om$. Since the boundary is regular, there
exists a closed  $n+1$-dimensional  ball
\[
\begin{split}
\{(x,t)\,:\,\abs{x-x'}^2+(t-t')^2\leq R_0^2\}
\end{split}
\]
in the complement that intersects the closure $\ol \Om_T$ exactly at $(x_0,t_0)$.
Then  the function
\[
\begin{split}
f(x,t)=e^{-\alpha R_0^2}-e^{-\alpha R^2},\quad R=\sqrt{\abs{x-x'}^2+(t-t')^2}
\end{split}
\]
with a suitable constant $\alpha>0$ is a supersolution. The function $f$ takes the value $0$ at $(x_0,t_0)$ and is positive in $\ol \Om_T\setminus\{(x_0,t_0)\}$. Then for any $\eps$ there exists $\lambda>0$ such that
\[
\begin{split}
\eps+\psi(x_0,t_0)+\lambda f(x,t)
\end{split}
\]
is a supersolution and is greater than or equal to $\psi(x,t)$ on $\ol
\Om_T$. By comparison
$$\psi(x,t) \leq w^*(x,t) \leq \eps + \psi(x_0,t_0)+\lambda f(x,t).$$
Since $\eps>0$ is arbitrary, this implies that $w^*$ is
continuous up to the boundary, and that $w^*=\psi$ on $\partial_p
\Om_T$. Observe that the calculation omitted  above is delicate: in general, supersolutions cannot be multiplied by constants.

Finally, we show that $\hat w$ is $p$-parabolic in $\{\hat w>\psi\}$. Indeed, for each $(x_0,t_0)\in \{\hat w>\psi\}$, there exists $\lambda>0$ and a
cylinder $Q=Q(x_0,t_0)\Subset\{\hat w>\psi\} $ such that
\[
\hat w>\lambda> \psi
\]
in $Q$. But now for the Poisson modification  $\hat w_P$ of $\hat w$ in $Q$, we have
\[
\hat w \geq \hat w_P>\lambda> \psi.
\]
This implies that $w_P=\hat w$ since  $ \hat w$ was the infimum, and thus $\hat w$ is
$p$-parabolic in $Q$.
\end{proof}

 Next we define a \emph{variational solution,} first
 for a continuous obstacle. Under assumptions on the time derivative of the obstacle, the existence of a variational solution is treated in \cite{altl83} and \cite{bogeleindm09}. See also \cite{kinderlehrers80}.

Let $\psi \in
C(\overline \Om_T)$
and define the class $\mathcal F_{\psi}$  consisting of all functions
$v\in C(\overline \Om_T)$ such that
\[
v\in L^p(0,T; W^{1,p}(\Om)),\quad v=\psi\textrm{ on } \partial_p \Om_T\quad
\textrm{and} \quad v\geq \psi  \textrm{ in } \Om_T.
\]
\begin{defi}
\label{def:smooth}
A function  $v\in
\mathcal F_{\psi}$ is a \emph{variational solution} to the obstacle problem if
\begin{equation}
\label{eq:smooth}
\begin{split}
\int\!\!\int_{\Om_T} &\bigg(\abs{\nabla v}^ {p-2} \nabla v \cdot \nabla
(\phi-v)+(\phi-v) \parts{\phi}{t}\bigg) \ud x \ud t \\
&\geq \half \int_\Om
\abs{\phi(x,T)-v(x,T)}^2 \ud x
\end{split}
\end{equation}
for all $\phi\in C^{\infty}(\Om_T)$ in $\mathcal F_{\psi}$ such that $\parts{\phi}{t} \in L^{q}(\Om_T)$,\ $q=p/(p-1)$.
\end{defi}

By an approximation procedure, we can extend the
admissible class of test functions to include all continuous
$\phi\in L^p(0,T;W^{1,p}(\Om))$ in $\mathcal F_{\psi}$ such that
$\parts{\phi}{t} \in L^{q}(\Om_T)$,\ $q=p/(p-1)$.
\medskip

For a smooth variational solution $v$,  integration by parts implies
\[
\begin{split}
\int_0^T\!\!\int_\Om (\phi-v) \parts{\phi}{t} \ud x \ud t&=\half \int_\Om
\abs{\phi(x,T)-v(x,T)}^2 \ud x\\
&\hspace{1 em}+\int_0^T\!\!\int_\Om (\phi-v) \parts{v}{t}  \ud x \ud t
\end{split}
\]
and thus \eqref{eq:smooth} can be written as
\begin{equation}
\label{eq:obstacle-problem-for-smooth}
\begin{split}
\int\!\!\int_{\Om_T} \left(\abs{\nabla v}^ {p-2} \nabla v \cdot \nabla
(\phi-v)+(\phi-v) \parts{v}{t}\right)  \ud x \ud t\geq 0.
\end{split}
\end{equation}

 Next we show that the least solution satisfies \defiref{def:smooth},
 and thus, for a continuous obstacle,
 this gives us the existence of a variational solution.

Below, we use the standard mollification
\begin{equation}
\label{eq:friedrichs}
\begin{split}
u_\sigma(x,t)=\int_\R u(x,t-s) \zeta_\sigma(s)\ud s
\end{split}
\end{equation}
with Friedrichs' mollifier
\[
\begin{split}
\zeta_\sigma(s)=\begin{cases}
\frac{C}{\sigma} e^{-\sigma^2/(\sigma^2-s^2)},& \abs{s}<\sigma\\
0,& \abs{s}\geq \sigma,
\end{cases}
\end{split}
\]
where the constant $C$ is chosen so that $\int_{-\infty}^{\infty} \zeta_\sigma(s)\ud s=1$.
Let $\vp \in C^\infty_0(\Om_T),\ \vp \geq 0$ and choose
$\sigma<\dist\left(\supp(\vp),\Om\times\{0,T\}\right)$. We insert
$\vp_\sigma$ into \eqref{eq:weak-super}, change variables, and apply
Fubini's theorem to obtain
\begin{equation}
\label{eq:friedrichs-mollified}
\int\!\!\int_{\Om_T}\left(\left(\abs{\nabla u}^{p-2} \nabla u\right)_\sigma \cdot \nabla
\vp +\parts{u_\sigma}{t}\vp\right)\ud x \ud t\geq 0
\end{equation}
for the weak supersolution $u$.
The analogous formula with equality  holds for weak solutions.
\begin{theo}
\label{thm:from-potential-to-variational}
Let $\psi \in C_0( \Om_T).$
 Then
the least solution $w^*$ is also a variational solution. In other words, $w^*$  satisfies the variational inequality
\[
\begin{split}
\int\!\!\int_{\Om_T} &\left(\abs{\nabla w^*}^ {p-2} \nabla w^* \cdot \nabla
(\phi-w^*)+(\phi-w^*) \parts{\phi}{t}\right) \ud x \ud t \\
&\geq \half \int_\Om
\abs{\phi(x,T)-w^*(x,T)}^2 \ud x
\end{split}
\]
for all $\phi\in C^{\infty}(\Om_T)$ in $\mathcal F_\psi$ such that $\parts{\phi}{t} \in L^{q}(\Om_T),\ q=p/(p-1)$.
\end{theo}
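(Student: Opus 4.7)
The plan is to deduce the variational inequality for $v := w^*$ from the properties supplied by \theoref{thm:continuous}: $v$ is continuous on $\overline{\Om_T}$, $v = \psi$ on $\partial_p\Om_T$, $v$ is a weak supersolution on $\Om_T$, and $v$ is $p$-parabolic on the open set $U := \{v > \psi\}$. A formal integration by parts in $t$, using $\phi(\cdot,0) = v(\cdot,0)$, shows that the stated variational inequality is equivalent to the cleaner form
\[
\int\!\!\int_{\Om_T}\Bigl(\abs{\nabla v}^{p-2}\nabla v \cdot \nabla(\phi - v) + (\phi - v)\parts{v}{t}\Bigr)\ud x\ud t \geq 0,
\]
in which the boundary term at $t=T$ has been absorbed. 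Since $\parts{v}{t}$ need not exist in any classical sense, I would interpret the display above through time-mollification.

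The next step is to pass to the Friedrichs regularization $v_\sigma$ defined in \eqref{eq:friedrichs}. By \eqref{eq:friedrichs-mollified} the supersolution inequality reads
\[
\int\!\!\int_{\Om_T}\Bigl((\abs{\nabla v}^{p-2}\nabla v)_\sigma \cdot \nabla \vp + \parts{v_\sigma}{t}\vp\Bigr)\ud x\ud t \geq 0
\]
for every nonnegative $\vp$ compactly supported in $\Om_T$ (at distance greater than $\sigma$ from the time caps), and the same identity holds with equality, for $\vp$ of any sign, whenever $\spt \vp \subset U$. Choosing $\vp = \phi - v_\sigma$ and splitting it into positive and negative parts, I would apply the supersolution inequality to $(\phi - v_\sigma)^+$ and the weak equation to $(v_\sigma - \phi)^+$. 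The latter is legitimate because $\{v_\sigma > \phi\} \subset \{v_\sigma > \psi\}$, and the uniform convergence $v_\sigma \to v$ together with the continuity of $v$ forces this set to lie strictly inside $U$ for all $\sigma$ sufficiently small. Summing the two contributions gives
\[
\int\!\!\int_{\Om_T}\Bigl((\abs{\nabla v}^{p-2}\nabla v)_\sigma \cdot \nabla(\phi - v_\sigma) + \parts{v_\sigma}{t}(\phi - v_\sigma)\Bigr)\ud x\ud t \geq 0.
\]

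An integration by parts in time now converts $\int\!\!\int\parts{v_\sigma}{t}(\phi - v_\sigma)\ud x\ud t$ into $\int\!\!\int(\phi - v_\sigma)\parts{\phi}{t}\ud x\ud t - \half\int(\phi(T) - v_\sigma(T))^2\ud x + \half\int(\phi(0) - v_\sigma(0))^2\ud x$; the last term vanishes in the limit because $v(\cdot,0) = \psi(\cdot,0) = \phi(\cdot,0)$. Passing $\sigma \to 0$, using the strong convergence $(\abs{\nabla v}^{p-2}\nabla v)_\sigma \to \abs{\nabla v}^{p-2}\nabla v$ in $L^{q}(\Om_T)$ and the uniform continuity of $v$, yields the claimed inequality. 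I expect the main technical obstacle to be the verification that the support of $(v_\sigma - \phi)^+$ really lies inside $U$ so that the solution property of $v$ there can be invoked; this hinges on the openness of $U$, the continuity of $v$, and a careful dependence of $\sigma$ on $\phi$. A secondary wrinkle is that \eqref{eq:friedrichs} averages across $t=0,T$, which forces an extension of $v$ past $(0,T)$ together with a mild time cutoff to legitimize the boundary terms appearing in the final integration by parts.
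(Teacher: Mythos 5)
Your outline follows the paper's strategy (split $\phi-w^*$ into its positive and negative parts, use the supersolution inequality for the positive part and the $p$-parabolicity of $w^*$ in the non-contact region for the negative part, time-mollify, integrate by parts with a cutoff near $t=0,T$), but the step you yourself single out as the main obstacle contains a genuine gap. You claim that uniform convergence $v_\sigma\to v$ plus continuity forces $\{v_\sigma>\phi\}$ to lie inside $U=\{v>\psi\}$ for small $\sigma$. This is false in general. At a contact point $(x_0,t_0)$ with $v(x_0,t_0)=\psi(x_0,t_0)=\phi(x_0,t_0)$, the time average $v_\sigma(x_0,t_0)$ can be strictly larger than $v(x_0,t_0)=\phi(x_0,t_0)$ for every $\sigma>0$ (the obstacle only forces $v\geq\psi$, and $v$ may well exceed $\phi$ at nearby times), so $\{v_\sigma>\phi\}$ can meet the contact set $\{v=\psi\}$ no matter how small $\sigma$ is; the inclusion $\{v_\sigma>\phi\}\subset\{v_\sigma>\psi\}$ that you do have is useless, since it is $v$, not $v_\sigma$, that solves the equation off the contact set. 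Even without mollification, $\{v>\phi\}$ need not be \emph{compactly} contained in $U$, because its closure can reach points where $v=\phi=\psi$. On that bad set you only have the supersolution inequality, which goes in the wrong direction for the negative part, so your summation step does not close.

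The paper's proof repairs exactly this point by inserting an extra approximation layer: it replaces $\phi$ by a decreasing sequence of smooth functions $\phi^i$ converging to $\phi$ (so $\phi^i>\phi\geq\psi$), which guarantees $\{\phi^i-w^*<0\}\Subset U$; then, for each fixed $i$, the mollification parameter $\sigma$ is chosen so small that also $\{(\phi^i-w^*)_\sigma<0\}\Subset U$, the equality for $p$-parabolic functions is applied there with the test function $\chi^{h}_{0,T}(\phi^i_\sigma-w^*_\sigma)_-$, and the limits are taken in the order $\sigma\to0$, $h\to0$, $i\to\infty$. Your remaining remarks (the formal reduction to the inequality with $\partial_t v$, and the need for a time cutoff because the Friedrichs mollification averages across $t=0,T$) are only loose ends that the paper's explicit cutoff $\chi^h_{0,T}$ handles, but without the approximation of $\phi$ from above your argument for the negative part does not go through.
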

\begin{proof}

First, observe  that $w^*=\psi $ on $\partial_p \Om_T$ by \theoref{thm:continuous}, and $w^*\in L^p(0,T;W^{1,p}_0(\Om))$, cf.\  Lemma~\ref{lem:global-int}. Denote by $\chi^{h}_{0,T}$ a continuous, piecewise linear approximation of a characteristic function such that
\begin{equation}
\label{eq:cut-off}
\begin{split}
\begin{cases}
      \parts{\chi^{h}_{0,T}}{t}=1/h, &  \text{ if }h<t<2h,\\
      \chi^{h}_{0,T}=1, & \text{ if } 2h<t<T-2h,\\
      \parts{\chi^{h}_{0,T}}{t}=-1/h, &  \text{ if }T-2h<t <T-h,\\
      \chi^{h}_{0,T}=0, & \text{ otherwise},
    \end{cases}
\end{split}
\end{equation}
and let $\phi$ be the test function in the theorem. Then an
approximation argument justifies the use of $$ \varphi =\chi^{h}_{0,T}(\phi_\sigma-w^*_\sigma)_+=\chi^{h}_{0,T}\max(\phi_\sigma-w^*_\sigma\, , \,0)$$ as a test function in \eqref{eq:friedrichs-mollified}, so that
\[
\begin{split}
\int\!\!\int_{\Om_T}\bigg(\Big(\abs{\nabla w^*}^{p-2} \nabla w^*\Big)_\sigma \cdot \chi^{h}_{0,T}&\nabla
(\phi_\sigma-w^*_\sigma)_+ \\
&+ \parts{w^*_\sigma}{t}\chi^{h}_{0,T}(\phi_\sigma-w^*_\sigma)_+ \bigg)\ud x \ud t \geq 0.
\end{split}
\]
By adding the integral of $-\parts{\phi_\sigma}{t}\chi^{h}_{0,T}(\phi_\sigma-w^*_\sigma )_+$ to both sides and integrating by parts, we get
\[
\begin{split}
\int\!\!\int_{\Om_T}&\bigg(\Big(\abs{\nabla w^*}^{p-2} \nabla w^*\Big)_\sigma \cdot \chi^{h}_{0,T} \nabla
(\phi_\sigma-w^*_\sigma)_+ \\
&\hspace{8 em}+\frac{1}{2}((\phi_\sigma-w^*_\sigma)_+)^2\parts{\chi^{h}_{0,T}}{t}\bigg)\ud x \ud t \\
&\geq -\int\!\!\int_{\Om_T}\parts{\phi_\sigma}{t}\chi^{h}_{0,T}(\phi_\sigma-w^*_\sigma )_+\ud x \ud t.
\end{split}
\]

Letting first $\sigma\to 0$ and then $h\to 0$, we get
\begin{equation}
\label{eq:above-obstacle}
\begin{split}
\int\!\!\int_{\Om_T}\bigg(\abs{\nabla w^*}^{p-2} \nabla w^* &\cdot \nabla
(\phi-w^*)_+ +\parts{\phi}{t}(\phi-w^* )_+\bigg)\ud x \ud t \\
\geq &\frac{1}{2}\int_{\Om}(\phi(x,T)-w^*(x,T))_+^2\ud x.
\end{split}
\end{equation}

Next we perform  a similar calculation, using the fact  that $w^*$ is
$p$-parabolic in  the open set $U=\Om_T\cap \{\phi<w^*\}$. This time we use the
test function $\chi^{h}_{0,T}(\phi_\sigma-w^*_\sigma)_-=\chi^{h}_{0,T}\min(\phi_\sigma-w^*_\sigma,0)$. Since $\phi$ is smooth, we can choose a decreasing sequence of smooth functions  $\phi^i$ converging to $\phi$ so that
  \[
  \begin{split}
  \{\phi^i-w^*<0\}\Subset U.
  \end{split}
  \]
  For a fixed index $i$, we can choose  $\sigma>0$ so small that also
  \[
  \begin{split}
    \{(\phi^i-w^*)_\sigma<0\}\Subset U.
  \end{split}
  \]
  A similar calculation as the previous one implies, since $w^*$ is $p$-parabolic  in $U$,
\[
\begin{split}
\int_{U}&\bigg(\Big(\abs{\nabla w^*}^{p-2} \nabla w^*\Big)_\sigma \cdot \chi^{h}_{0,T} \nabla
(\phi^i_\sigma-w^*_\sigma)_- \\
&\hspace{8 em}+\frac{1}{2}((\phi^i_\sigma-w^*_\sigma)_-)^2\parts{\chi^{h}_{0,T}}{t}\bigg)\ud x \ud t \\
&=-\int_{U}\parts{\phi^i_\sigma}{t}\chi^{h}_{0,T}(\phi^i_\sigma-w^*_\sigma )_-\ud x \ud t.
\end{split}
\]
As first $\sigma\to 0$, then $h\to 0$ and finally $i\to \infty$, we
obtain
\begin{equation}
\label{eq:below-obstacle}
\begin{split}
\int_{U}&\bigg(\abs{\nabla w^*}^{p-2} \nabla w^* \cdot \nabla
(\phi-w^*)_- +\parts{\phi}{t}(\phi-w^*)_-\bigg)\ud x \ud t \\
&= \frac{1}{2}\int_{\Om}(\phi(x,T)-w^*(x,T))_-^2\ud x.
\end{split}
\end{equation}
Together \eqref{eq:above-obstacle} and \eqref{eq:below-obstacle} prove the claim.
\end{proof}

 We recall  the convenient convolution
\begin{equation}
\label{eq:naumann-convolution}
\begin{split}
u_\eps(x,t)=\frac{1}{\eps}\int_0^t e^{(s-t)/\eps} u(x,s) \ud s,
\end{split}
\end{equation}
which is expedient for our purpose;
see for example \cite{naumann84}, \cite{boccardodgo97}, and
\cite{kinnunenl06}.
It has the following properties.
\begin{lem}
\label{lem:naumann}
\begin{enumerate}[(i)]
\item \label{item:time-derivative-of-naumann}
If $u \in L^p(\Om_T)$, then
\[
\begin{split}
\norm{u_\eps}_{L^p(\Om_T)}\leq \norm{u}_{L^p(\Om_T)},
\end{split}
\]
\[
\begin{split}
\parts{u_\eps}{t}=\frac{u-u_\eps}{\eps}\in L^p(\Om_T),
\end{split}
\]
and \[
\begin{split}
u_\eps\to u \quad \trm{in} \quad L^p(\Om_T)\quad \trm{as} \quad \eps\to 0.
\end{split}
\]
\item If $\nabla u\in L^p(\Om_T)$, then $\nabla u_\eps=(\nabla u)_\eps$ componentwise,
\[
\begin{split}
\norm{\nabla u_\eps}_{L^p(\Om_T)}\leq \norm{\nabla u}_{L^p(\Om_T)},
\end{split}
\]
and
\[
\begin{split}
\nabla u_\eps\to \nabla u \quad \trm{in} \quad L^p(\Om_T)\quad \trm{as} \quad \eps\to 0.
\end{split}
\]
\item
Furthermore, if $u^k\to u$ in $L^p(\Om_T)$, then also
\[
\begin{split}
u^k_\eps\to u_\eps\quad \trm{and}\quad \parts{u^k_\eps}{t}\to \parts{u_\eps}{t}
\end{split}
\]
in $L^p(\Om_T)$.
\item If $\nabla u^k\to \nabla u$ in $L^p(\Om_T)$, then $\nabla u^k_\eps\to \nabla u_\eps$ in $L^p(\Om_T)$.
\item Analogous results hold for the weak convergence in $L^p(\Om_T)$.
\item Finally, if $\vp\in C(\ol \Om_T)$, then
\[
\begin{split}
\vp_\eps(x,t) +e^{- \frac{t}{\eps}} \vp(x,0) \to \vp(x,t)
\end{split}
\]
\emph{uniformly} in $\Om_T$ as $\eps\to 0$.
\end{enumerate}
\end{lem}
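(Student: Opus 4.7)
The plan is to dispatch the six parts in order; (i) and (vi) carry the substantive work, and (ii)--(v) then follow by linearity, componentwise application, or duality.

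For (i), view $u_\eps(x,t)$ as the integral of $u(x,\cdot)$ against the sub-probability measure $d\mu_t(s)=\eps^{-1}e^{(s-t)/\eps}\,ds$ on $(0,t)$, whose total mass $1-e^{-t/\eps}$ is at most $1$. Jensen's inequality, applied after normalizing $\mu_t$ to a probability measure, yields the pointwise bound
\[
|u_\eps(x,t)|^p \leq \int_0^t \frac{e^{(s-t)/\eps}}{\eps}\,|u(x,s)|^p\,ds,
\]
and Fubini's theorem, combined with $\int_s^T \eps^{-1}e^{(s-t)/\eps}\,dt \leq 1$, upgrades this to the contraction $\|u_\eps\|_{L^p(\Om_T)} \leq \|u\|_{L^p(\Om_T)}$. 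The identity $\partial_t u_\eps = (u-u_\eps)/\eps$ is a one-line differentiation of $u_\eps(x,t) = \eps^{-1}e^{-t/\eps}\int_0^t e^{s/\eps}u(x,s)\,ds$. For the convergence $u_\eps\to u$ in $L^p$, I would argue by density: continuous $u$ are handled by (vi), which gives uniform and hence $L^p$ convergence on $\ol{\Om_T}$, and the contraction then extends the result to all $u\in L^p(\Om_T)$.

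Item (ii) is immediate: since $u\mapsto u_\eps$ involves no spatial derivative, it commutes with each $\partial_{x_i}$ in the distributional sense, so (i) applied componentwise to $\nabla u$ gives the claim. For (iii), linearity and (i) give $(u^k-u)_\eps\to 0$ in $L^p$, and the derivative convergence follows from $\partial_t u^k_\eps - \partial_t u_\eps = \eps^{-1}\bigl((u^k-u)-(u^k-u)_\eps\bigr)$. Item (iv) is the componentwise version of (iii). For (v), Fubini identifies the adjoint:
\[
\int_0^T u_\eps(x,t)\phi(x,t)\,dt = \int_0^T u(x,s)\phi^\sharp(x,s)\,ds, \qquad \phi^\sharp(x,s) := \int_s^T \frac{e^{(s-t)/\eps}}{\eps}\phi(x,t)\,dt,
\]
and the same Jensen estimate yields $\|\phi^\sharp\|_{L^q(\Om_T)} \leq \|\phi\|_{L^q(\Om_T)}$; pairing $u^k\rightharpoonup u$ against $\phi^\sharp$ then gives $u^k_\eps \rightharpoonup u_\eps$ against~$\phi$.

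The principal technical point is (vi). After the substitution $r=(t-s)/\eps$ one has $\vp_\eps(x,t) = \int_0^{t/\eps} e^{-r}\vp(x,t-\eps r)\,dr$, whence a direct computation yields
\[
\vp_\eps(x,t) + e^{-t/\eps}\vp(x,0) - \vp(x,t) = e^{-t/\eps}\bigl(\vp(x,0)-\vp(x,t)\bigr) + \int_0^{t/\eps}e^{-r}\bigl(\vp(x,t-\eps r)-\vp(x,t)\bigr)\,dr.
\]
The corrector $e^{-t/\eps}\vp(x,0)$ is precisely what is required because the one-sided convolution ``forgets'' the initial datum. The first term on the right is bounded by $2\|\vp\|_\infty\, e^{-t/\eps}$ and is small when $t/\eps$ is large, while for small $t$ it is controlled by uniform continuity of $\vp$ at $t=0$. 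For the integral, I would split at a cutoff $r_0=\delta/\eps$, with $\delta$ chosen via uniform continuity of $\vp$ on the compact set $\ol{\Om_T}$: the piece $r\leq r_0$ is uniformly smaller than any prescribed $\eta$, while the tail is bounded by $2\|\vp\|_\infty\, e^{-\delta/\eps}$. Choosing $\delta$ first and then $\eps$ small yields uniform convergence on $\ol{\Om_T}$. The only subtlety is coordinating the different regimes of $t$, but beyond uniform continuity on a compact set nothing deeper is required.
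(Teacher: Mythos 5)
The paper itself gives no proof of this lemma; it simply cites Naumann, Boccardo--Dall'Aglio--Gallou\"et--Orsina and Kinnunen--Lindqvist, and your argument is exactly the standard one found in those sources: Jensen plus Fubini for the $L^p$-contraction, direct differentiation of $\eps^{-1}e^{-t/\eps}\int_0^t e^{s/\eps}u\,ds$ for the identity $\partial_t u_\eps=(u-u_\eps)/\eps$, linearity and componentwise application for (ii)--(iv), the adjoint kernel for weak convergence, and the change of variables $r=(t-s)/\eps$ with a splitting of the integral for (vi). All of these steps are correct. The only point to tighten is the density step in (i): part (vi) gives uniform convergence of $\vp_\eps+e^{-t/\eps}\vp(x,0)$, not of $\vp_\eps$ itself, so you should add the one-line observation that the corrector satisfies $\|e^{-t/\eps}\vp(\cdot,0)\|_{L^p(\Om_T)}\le C\|\vp\|_\infty\,\eps^{1/p}\to 0$, whence $\vp_\eps\to\vp$ in $L^p(\Om_T)$ for continuous $\vp$; combined with the contraction $\|(u-\vp)_\eps\|_{L^p}\le\|u-\vp\|_{L^p}$ this completes the density argument for general $u\in L^p(\Om_T)$.
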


Next we show that a variational solution is unique for a continuous compactly supported obstacle.
\begin{theo}
\label{thm:smooth-unique}
Let $\psi \in C_0(\Om_T)$.  The variational solution in \defiref{def:smooth} with this obstacle  is unique.
\end{theo}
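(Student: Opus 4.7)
Let $v_1, v_2$ be two variational solutions; the plan is to use each one formally as a test function in \eqref{eq:smooth} for the other. Plugging $\phi=v_2$ into the inequality for $v_1$ and $\phi=v_1$ into the inequality for $v_2$, then adding, should produce
\[
\begin{split}
-\int\!\!\int_{\Om_T}&\bigl(|\nabla v_1|^{p-2}\nabla v_1-|\nabla v_2|^{p-2}\nabla v_2\bigr)\cdot\nabla(v_1-v_2)\,dx\,dt \\
&+\int\!\!\int_{\Om_T}(v_2-v_1)\partial_t(v_2-v_1)\,dx\,dt \geq \int_\Om (v_1-v_2)^2(x,T)\,dx.
\end{split}
\]
Since $\psi$ has compact support, $v_1(\cdot,0)=v_2(\cdot,0)=0$, and integration by parts in time identifies the second integral with $\half\int_\Om(v_1-v_2)^2(x,T)\,dx$. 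Writing the first integral as $-I_p$ with $I_p\geq 0$ by the monotonicity of $\xi\mapsto|\xi|^{p-2}\xi$, the combined inequality collapses to $I_p+\half\int_\Om(v_1-v_2)^2(x,T)\,dx\leq 0$. Hence $v_1(\cdot,T)=v_2(\cdot,T)$ a.e., $I_p=0$ forces $\nabla v_1=\nabla v_2$ a.e., and the common boundary values on $\partial_p\Om_T$ give $v_1\equiv v_2$.

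\textbf{Justification of admissibility.} The obstruction is that neither $v_j$ has $\partial_t v_j\in L^q$, so it cannot be used directly in \defiref{def:smooth}. I would regularize in time via the Naumann convolution \eqref{eq:naumann-convolution}: by \lemref{lem:naumann}, $(v_2)_\eps$ satisfies $\partial_t(v_2)_\eps=(v_2-(v_2)_\eps)/\eps\in L^p(\Om_T)\subset L^q(\Om_T)$, $\nabla(v_2)_\eps\to\nabla v_2$ in $L^p$, and (by property (vi), using $v_2\in C(\overline{\Om_T})$ with $v_2(\cdot,0)=0$) $(v_2)_\eps\to v_2$ uniformly on $\overline{\Om_T}$. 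The boundary condition on $\partial_p\Om_T$ is inherited from $v_2$. The only property that does not come for free is the obstacle condition $(v_2)_\eps\geq\psi$; to secure it I would use
\[
\phi^\eps := (v_2)_\eps + \delta_\eps\chi,\qquad \delta_\eps:=\|v_2-(v_2)_\eps\|_{L^\infty(\Om_T)}\to 0,
\]
where $\chi\in C_0^\infty(\Om_T)$ satisfies $\chi\equiv 1$ on $\supp\psi$ and $0\leq\chi\leq 1$. On $\supp\psi$ we have $\phi^\eps\geq(v_2)_\eps+\delta_\eps\geq v_2\geq\psi$; off $\supp\psi$ we have $\psi=0$ while both summands in $\phi^\eps$ are nonnegative. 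Since $\chi$ vanishes on $\partial_p\Om_T$, the boundary values are preserved, and $\partial_t\phi^\eps\in L^q(\Om_T)$, so $\phi^\eps$ is admissible.

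\textbf{Main obstacle.} The hard part will be passing to the limit $\eps\to 0$ in the time term $\int\!\!\int_{\Om_T}(\phi^\eps-v_1)\partial_t\phi^\eps\,dx\,dt$, because $\partial_t(v_2)_\eps$ does not converge strongly. The device is the algebraic identity $(\phi^\eps-v_1)\partial_t\phi^\eps = \half\partial_t(\phi^\eps)^2 - v_1\partial_t\phi^\eps$: integrating the first piece in $t$ produces the boundary contribution $\half\int_\Om\phi^\eps(x,T)^2\,dx$, which converges by uniform convergence of $\phi^\eps$, whereas the second is handled by writing $\partial_t(v_2)_\eps=(v_2-(v_2)_\eps)/\eps$ and invoking the weak-convergence statements of \lemref{lem:naumann}. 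The correction $\delta_\eps\partial_t\chi$ produces only an $O(\delta_\eps)$ term that disappears in the limit. Symmetric reasoning with the roles of $v_1$ and $v_2$ interchanged yields the formal energy inequality displayed above, and the conclusion $v_1=v_2$ follows.
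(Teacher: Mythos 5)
Your formal energy identity and the admissibility fix $\phi^\eps=(v_2)_\eps+\delta_\eps\chi$ are fine, but the step you yourself flag as the main obstacle is a genuine gap, not a technicality. The term $\int\!\!\int_{\Om_T} v_1\,\partial_t (v_2)_\eps\ud x\ud t$ cannot be ``handled by the weak-convergence statements of Lemma 3.12'': item (v) of that lemma concerns convergence of $u^k_\eps$ and $\partial_t u^k_\eps$ for \emph{fixed} $\eps$ as $k\to\infty$, and says nothing about $\partial_t (v_2)_\eps$ as $\eps\to 0$. Indeed $\partial_t(v_2)_\eps=(v_2-(v_2)_\eps)/\eps$ is not even bounded in $L^{q}(\Om_T)$ as $\eps\to0$ unless $v_2$ already possesses a time derivative in $L^q$ --- which is exactly what a variational solution is not known to have. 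Symmetrizing does not rescue the argument: adding the two cross-tested inequalities and writing $f=v_2-v_1$, the time terms combine (up to the harmless $\delta_\eps\chi$ corrections) into
\[
-\eps\int\!\!\int_{\Om_T}\Bigl(\abs{\partial_t (v_1)_\eps}^2+\abs{\partial_t (v_2)_\eps}^2\Bigr)\ud x\ud t
\;+\;\frac{1}{\eps}\int\!\!\int_{\Om_T} f\,(f-f_\eps)\ud x\ud t,
\]
and the last term equals $\frac{1}{\eps}\int\!\!\int_{\Om_T}(f-f_\eps)^2\ud x\ud t+\frac12\int_\Om f_\eps(x,T)^2\ud x$. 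To recover your formal inequality you would need $\frac{1}{\eps}\norm{f-f_\eps}_{L^2(\Om_T)}^2\to 0$, i.e.\ a quantitative fractional time regularity of $v_1-v_2$ that is not available; since this uncontrolled positive quantity sits on the left-hand side of the summed inequality, no bound on the monotone gradient term or on $f(\cdot,T)$ follows.

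This is precisely why the paper does not cross-test. Both inequalities are tested there with one and the same function $\phi=\bigl(\tfrac{u+v}{2}+\alpha\eta\bigr)_\eps$; after summation the time term is $2\int\!\!\int_{\Om_T}\bigl(\phi-\tfrac{u+v}{2}\bigr)\partial_t\phi\ud x\ud t$, and because $\phi$ is the mollification of $g=\tfrac{u+v}{2}+\alpha\eta$ one has $(\phi-g)\,\partial_t\phi=-\tfrac{1}{\eps}(g-g_\eps)^2\le 0$: the dangerous quadratic term now carries a favorable sign and can simply be discarded, leaving only an $O(\alpha)$ correction, after which one lets $\eps\to0$ and then $\alpha\to0$. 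If you want to keep the cross-testing route, you must first establish extra time regularity of variational solutions (something like $\norm{v-v_\eps}_{L^2}=o(\sqrt{\eps})$), which is a substantial additional step; otherwise replace your test functions by the mollified average as in the paper.
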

\begin{proof}
Suppose that $u$ and $v$ are two solutions. They are continuous. We sum up
\[
\begin{split}
\int\!\!\int_{\Om_T} &\bigg(\abs{\nabla u}^ {p-2} \nabla u \cdot \nabla
(\phi-u)+(\phi-u) \parts{\phi}{t}\bigg) \ud x \ud t \\
&\geq \half \int_\Om
\abs{\phi(x,T)-u(x,T)}^2 \ud x
\end{split}
\]
and
\[
\begin{split}
\int\!\!\int_{\Om_T} &\bigg(\abs{\nabla v}^ {p-2} \nabla v \cdot \nabla
(\phi-v)+(\phi-v) \parts{\phi}{t}\bigg) \ud x \ud t \\
&\geq \half \int_\Om
\abs{\phi(x,T)-v(x,T)}^2 \ud x.
\end{split}
\]
We end up with
\begin{gather}
\label{eq:sum-in-uniqueness-proof}
\int\!\!\int_{\Om_T} \Big(\abs{\nabla v}^ {p-2} \nabla v \cdot \nabla (v-\phi)-\abs{\nabla u}^ {p-2} \nabla u \cdot \nabla
(\phi-u)\Big) \ud x \ud t \nonumber \\
\leq 2  \int\!\!\int_{\Om_T} \left(\phi-\frac{u+v}{2}\right) \parts{\phi}{t} \ud x \ud t.
\end{gather}
 If we could choose the test function $\phi$ equal to $(u+v)/2$, the
 desired result would follow easily from the structure of the
 left-hand member. However, this function is not admissible, since its
 time derivative is not guaranteed. We modify it by utilizing convolution
 \eqref{eq:naumann-convolution}, and use the test function
\[
\begin{split}
\phi=\left(\frac{u+v}{2}+ \alpha \eta(x)\right)_{\eps} ,
\end{split}
\]
where $\eta \in C_{0}^{\infty}(\Omega)\,\, \eta \geq 0$ and  $\eta =1$
on $\spt \psi$. Here $\alpha > 0$ is given and $0 < \eps <
\eps(\alpha)$, where $\eps(\alpha)$
is so small that
$$ \phi \geq\left( \psi+\alpha \eta \right)_{\eps}\geq \psi$$
in $\Omega_T.$
 Now
$$\frac{\partial \phi}{\partial t} =
\frac{1}{\eps}\left[\left(\frac{u+v}{2}+\alpha \eta\right)- \left(
    \frac{u+v}{2}+\alpha \eta\right)_{\eps}\right]$$
and so we obtain
\[
\begin{split}
 \int&\!\!\int_{\Om_T}\left(\phi-\frac{u+v}{2}\right)\! \parts{\phi}{t} \ud x \ud t\\
&= \int\!\!\int_{\Om_T}\left(\phi-\left(\frac{u+v}{2}+\alpha \eta\right)\right)\! \parts{\phi}{t} \ud x \ud t+\alpha\int\!\!\int_{\Om_T}  \eta  \parts{\phi}{t} \ud x \ud t\\
&= - \frac{1}{\eps} \int\!\!\int_{\Om_T} \! \left[\left(\frac{u+v}{2}+\alpha \eta\right)- \left(
    \frac{u+v}{2}+\alpha \eta\right)_{\eps}\right]^{2}\! \ud x \ud t
\\
&\hspace{13 em}+ \alpha  \int\!\!\int_{\Om_T} \eta(x)\frac{\partial \phi}{\partial t}
\ud x \ud t\\
&\leq 0 + \alpha
\int_{\Omega}\eta (x)\left(\frac{u+v}{2}+\alpha\eta \right)_{\eps}\!(x,T)\,dx.
\end{split}
\]
Now we can safely let $\eps \rightarrow 0$ after which we also let
$\alpha \rightarrow 0.$
The result is that
$$\frac{1}{2}  \int\!\!\int_{\Om_T}(|\nabla v|^{p-2}\nabla v - |\nabla
u|^{p-2}\nabla u)\cdot (\nabla v - \nabla u)  \ud x \ud t \leq 0.$$
The integrand is non-negative and zero only for $\nabla v = \nabla u$. Since $u$ and
$v$ have the same boundary values, they coincide.
\end{proof}

\begin{cor}
\label{cor:smooth-variational=least}
For the obstacle $\psi \in C_0(\Omega_T)$, the
 variational solution coincides with the least solution.  In particular, the variational solution is a weak supersolution.
\end{cor}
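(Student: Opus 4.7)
The plan is simply to chain the two immediately preceding theorems: Theorem~\ref{thm:from-potential-to-variational} shows that the least solution $w^*$ is itself a variational solution, and Theorem~\ref{thm:smooth-unique} shows that variational solutions are unique. Put together, any variational solution $v$ must equal $w^*$.

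The one step requiring a moment's care is to confirm that $w^*$ genuinely lies in the admissible class $\mathcal F_\psi$ of Definition~\ref{def:smooth}, so that it qualifies as a variational solution and not merely a function that happens to satisfy the integral inequality. Continuity on $\ol{\Om_T}$ together with the identity $w^* = \psi$ on $\partial_p \Om_T$ is supplied by Theorem~\ref{thm:continuous}; the pointwise inequality $w^* \geq \psi$ in $\Om_T$ is built into the definition of $w$ as the infimum over $\mathcal S_\psi$; and the global Sobolev membership $w^* \in L^p(0,T;W^{1,p}(\Om))$ is precisely the integrability lemma quoted in the proof of Theorem~\ref{thm:from-potential-to-variational}. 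Once these three points are in hand, Theorem~\ref{thm:from-potential-to-variational} applies and produces the variational inequality for $w^*$.

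Invoking Theorem~\ref{thm:smooth-unique} then forces $v = w^*$ for every variational solution $v$, which is the first assertion of the corollary. The ``in particular'' clause is now immediate: Theorem~\ref{thm:obstacle-sol-supersol} already tells us that $w^*$ is a weak supersolution, and this property transfers to $v$ through the equality $v = w^*$. No genuine obstacle arises in this corollary; the substantial work has been done in the preceding two theorems, and the present statement is essentially a bookkeeping synthesis of existence (from the potential-theoretic side) with uniqueness (from the variational side).
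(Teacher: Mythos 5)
Your proposal is correct and follows essentially the same route as the paper: the paper's proof is precisely the combination of Theorem~\ref{thm:from-potential-to-variational} (the least solution is a variational solution) with the uniqueness in Theorem~\ref{thm:smooth-unique}, the supersolution property coming from Theorem~\ref{thm:obstacle-sol-supersol}. Your extra check that $w^*$ belongs to $\mathcal F_\psi$ is already contained in the statement and proof of Theorem~\ref{thm:from-potential-to-variational}, so it adds detail but no new argument.
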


\begin{proof} According to Theorem 3.7 the least solution $w^*$ is also a
  variational solution. But there is only one variational solution
  according to the theorem.
\end{proof}
The corollary can be modified to include the case $\psi \in C^\infty(\ol \Om_T)$.  For a different approach to a continuous obstacle problem, see \cite{korteks09}.

\begin{cor}
\label{cor:continuous-ordered-obstacles}
Let $v_1,\ v_2$ be the variational solutions with the obstacles
 $\psi_1,\psi_2 \in C_0(\Omega_T)$. If $\psi_1 \leq
 \psi_2$, then $v_1 \leq v_2.$
\end{cor}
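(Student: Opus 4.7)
The plan is to reduce the comparison to the level of least solutions, where monotonicity is essentially built into the definition, and then invoke the identification from \corref{cor:smooth-variational=least}.

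More concretely, by \corref{cor:smooth-variational=least} applied to each obstacle separately, we have $v_1 = w_1^*$ and $v_2 = w_2^*$, where $w_i^*$ denotes the least solution associated with $\psi_i$. Therefore it suffices to establish
\[
w_1^* \leq w_2^* \quad \text{in } \Om_T.
\]
For this I would first show $w_1 \leq w_2$ for the pointwise infima (before regularization) by comparing the admissible classes $\mathcal S_{\psi_1}$ and $\mathcal S_{\psi_2}$ from \defiref{def:irregular-obstacle}. Any $\essliminf$-regularized supersolution $u$ with $u \geq \psi_2$ automatically satisfies $u \geq \psi_1$ a.e., since $\psi_1 \leq \psi_2$. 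Hence
\[
\mathcal S_{\psi_2} \subseteq \mathcal S_{\psi_1},
\]
and taking the infimum over the larger class yields a smaller function, so $w_1 \leq w_2$ pointwise in $\Om_T$.

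To pass from $w_1 \leq w_2$ to the inequality $w_1^* \leq w_2^*$ for the $\essliminf$-regularizations, I would simply use the monotonicity of $\essliminf$: for every $(x,t)\in \Om_T$,
\[
w_1^*(x,t) = \lim_{r\to 0}\essinf_{Q_r(x,t)\cap \Om_T} w_1 \leq \lim_{r\to 0}\essinf_{Q_r(x,t)\cap \Om_T} w_2 = w_2^*(x,t).
\]
Combining this with $v_1 = w_1^*$ and $v_2 = w_2^*$ gives the claim.

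I do not foresee a genuine obstacle in this argument; the only subtlety is the bookkeeping between $w_i$ and its regularization $w_i^*$, which is handled by the straightforward monotonicity above. All the work is already encoded in \corref{cor:smooth-variational=least}, which identifies the variational and least solutions for continuous compactly supported obstacles.
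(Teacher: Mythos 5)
Your proposal is correct and follows essentially the same route as the paper: identify $v_i=w_i^*$ via \corref{cor:smooth-variational=least} and then exploit the defining infimum property of the least solution (the paper phrases this as $w_1^*\leq v_2$ because $v_2\geq\psi_2\geq\psi_1$ and $w_1^*$ is the least one, which is just your class inclusion $\mathcal S_{\psi_2}\subseteq\mathcal S_{\psi_1}$ plus monotonicity of the regularization). No gap.
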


\begin{proof} By the previous corollary they are the least solutions:
  $v_1 = w_1^*$ and  $v_2 = w_2^*$. By Theorem 2.8 these
  are weak supersolutions. Since $v_2 \geq \psi_2 \geq \psi_1$, we
  must have $ w_1^* \leq v_2$, as $ w_1^*$ is the least
  one.
\end{proof}

\section{Irregular obstacle}

In this section we treat the irregular obstacle
with
 \begin{align*}
\text{\textsf{Assumption:}}&\quad\psi \in
L^p(0,T;W^{1,p}(\Omega)),\\ &\quad \spt\psi\Subset\Omega_T,\quad 0
\leq \psi \leq L.
\end{align*}The simplifying effect of the compactness assumption is not fully
utilized: the benefit for us comes from the zero region near the lateral
boundary $\partial \Omega \times [0,T].$

The least solutions are  well defined in this generality, but there is
a difficulty. On the one hand, the variational definition fails to
guarantee uniqueness, if only smooth test functions are admissible,
see Section~\ref{sec:special-cases}. On the other hand, complications
with time derivatives prevent us from using all the test functions
from the regularity class  the obstacle belongs to. Nevertheless, an approximation with variational solutions with suitable smooth obstacles turns out to give exactly the unique least solution, Theorem~\ref{thm:uniqueness}.

 However, first we
 discuss a
 convergence result in the elliptic theory, Proposition~\ref{prop:elliptic-approximation}.
The parabolic counterpart to the proposition is not a simple one.

 For
   $\psi\in W^{1,p}(\Om)$, we define the class
\[
\begin{split}
\mathcal K_{\psi}=\{\phi\in W^{1,p}(\Om)\,:\,\phi\geq \psi \trm{ a.e.\ in } \Om,\, \phi-\psi\in W^{1,p}_0(\Om)\}.
\end{split}
\]
Then $v \in \mathcal K_{\psi}$ is a variational solution to the
elliptic obstacle problem, if
\begin{equation}
\label{eq:elliptic}
\int_{\Om} \abs{\nabla v}^ {p-2} \nabla v \cdot \nabla
(\phi-v) \ud x \geq 0
\end{equation}
for every $\phi\in \mathcal K_{\psi}$. The variational solution agrees
with the least solution: $v=w^*$ a.e.\ in this case, see for example
\cite[Theorem 9.26.]{heinonenkm93}.  Our approximative definition
coincides with the least solution in the elliptic case. Notice that we
do not demand $\phi$ to be continuous now. The approximants are pretty
arbitrary in the next proposition.
\begin{prop}[Elliptic case]
\label{prop:elliptic-approximation}
Let $v_{\psi_{j}} \in  \mathcal K_{\psi_{j}}$ denote the variational
solution with the obstacle $\psi_{j}$. If $\psi_{j} \to \psi$ in
$W^{1,p}(\Omega),$ then
$$v_{\psi_{j}} \to v_{\psi}\quad \trm{in}\quad W^{1,p}(\Omega),$$
 where  $v_{\psi}$ is the variational solution with $\psi$ as an obstacle.
\end{prop}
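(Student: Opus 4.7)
The plan is the classical four-step program for variational inequalities of obstacle type: a uniform bound on $\{v_{\psi_j}\}$, extraction of a weak subsequential limit, identification of the limit as $v_\psi$ via admissible test-function constructions, and upgrade from weak to strong convergence through the monotonicity of $\xi\mapsto|\xi|^{p-2}\xi$ in the uniformly convex space $L^p$. Since $v_\psi$ is the unique solution, subsequential convergence then promotes to convergence of the full sequence.

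First I would insert $\phi=\psi_j\in\mathcal K_{\psi_j}$ into \eqref{eq:elliptic} for $v_{\psi_j}$ to obtain, via H\"older, $\|\nabla v_{\psi_j}\|_p\le\|\nabla\psi_j\|_p$, which stays bounded since $\psi_j\to\psi$ in $W^{1,p}$; Poincar\'e applied to $v_{\psi_j}-\psi_j\in W^{1,p}_0(\Omega)$ then bounds $\{v_{\psi_j}\}$ uniformly in $W^{1,p}(\Omega)$. I would pass to a subsequence with $v_{\psi_j}\rightharpoonup v$ in $W^{1,p}$, a.e.\ and strongly in $L^p$, and with $\psi_j\to\psi$ a.e. The pointwise inequality $v_{\psi_j}\ge\psi_j$ passes to the limit, and weak closedness of $W^{1,p}_0$ applied to $v_{\psi_j}-\psi_j$ forces $v-\psi\in W^{1,p}_0$, so that $v\in\mathcal K_\psi$.

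The central step is the identification $v=v_\psi$. For an arbitrary $\phi\in\mathcal K_\psi$, the plan is to use the admissible lattice comparison
\begin{equation*}
\phi_j:=\max(\phi,\psi_j)=\phi+(\psi_j-\phi)_+,
\end{equation*}
which lies in $\mathcal K_{\psi_j}$ (since $\phi-\psi_j=(\phi-\psi)-(\psi_j-\psi)\in W^{1,p}_0$ and the positive part preserves $W^{1,p}_0$) and converges to $\phi$ in $W^{1,p}$. Testing $v_{\psi_j}$'s inequality with $\phi_j$ and exploiting monotonicity gives $\int|\nabla\phi_j|^{p-2}\nabla\phi_j\cdot\nabla(\phi_j-v_{\psi_j})\,\ud x\ge 0$; passing to the limit (strong $L^{p'}$-convergence of $|\nabla\phi_j|^{p-2}\nabla\phi_j$ paired against weak $L^p$-convergence of $\nabla v_{\psi_j}$) then yields $\int|\nabla\phi|^{p-2}\nabla\phi\cdot\nabla(\phi-v)\,\ud x\ge 0$ for every $\phi\in\mathcal K_\psi$. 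A standard Minty step with $\phi_t:=v+t(\phi-v)\in\mathcal K_\psi$, $t\to 0^+$, converts this into the variational inequality for $v$; uniqueness then delivers $v=v_\psi$.

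For the strong convergence, I would test $v_{\psi_j}$'s inequality with $\phi_j:=\max(v_\psi,\psi_j)\in\mathcal K_{\psi_j}$, which converges to $v_\psi$ in $W^{1,p}$, obtaining $\int|\nabla v_{\psi_j}|^p\,\ud x\le\int|\nabla v_{\psi_j}|^{p-2}\nabla v_{\psi_j}\cdot\nabla\phi_j\,\ud x$. A Minty-type identification of the weak $L^{p'}$-limit of $|\nabla v_{\psi_j}|^{p-2}\nabla v_{\psi_j}$ as $|\nabla v_\psi|^{p-2}\nabla v_\psi$ then yields $\limsup\|\nabla v_{\psi_j}\|_p^p\le\|\nabla v_\psi\|_p^p$; combined with weak convergence in the uniformly convex $L^p$, this delivers strong gradient convergence, hence $v_{\psi_j}\to v_\psi$ in $W^{1,p}(\Omega)$. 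The main obstacle I anticipate is the engineering of test functions $\phi_j$ admissible in the \emph{moving} convex set $\mathcal K_{\psi_j}$ while converging strongly in $W^{1,p}$; the lattice device $\max(\phi,\psi_j)$ handles both directions, but its $\mathcal K_{\psi_j}$-admissibility rests on the compatibility of boundary traces implicit in the convergence $\psi_j\to\psi$.
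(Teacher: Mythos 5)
Your overall scaffolding (uniform bound via testing with $\psi_j$, weak subsequential limits, identification, Minty, Radon--Riesz upgrade) is sound, but the key step fails as written: the lattice comparison function $\phi_j=\max(\phi,\psi_j)$ is in general \emph{not} admissible in $\mathcal K_{\psi_j}$. Your justification claims $\phi-\psi_j=(\phi-\psi)-(\psi_j-\psi)\in W^{1,p}_0(\Om)$, but the hypothesis is only $\psi_j\to\psi$ in $W^{1,p}(\Om)$; nothing forces $\psi_j-\psi$ to have zero trace. Concretely, take $\psi_j=\psi-1/j$: then $\phi_j-\psi_j=(\phi-\psi+1/j)_+\geq 1/j$ almost everywhere in $\Om$, which cannot belong to $W^{1,p}_0(\Om)$. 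The same defect affects the strong-convergence step, where you test with $\max(v_\psi,\psi_j)$. Since both the identification $v=v_\psi$ and the norm upper bound hinge on these test functions, this is a genuine gap, not a technicality --- you flagged the trace issue yourself at the end, but it is not ``implicit in the convergence''; it must be engineered into the test function.

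The repair is the translation device, which is exactly what the paper uses: for $\phi\in\mathcal K_\psi$ the function $\phi+\psi_j-\psi$ lies in $\mathcal K_{\psi_j}$ (it dominates $\psi_j$ a.e.\ and differs from $\psi_j$ by $\phi-\psi\in W^{1,p}_0(\Om)$), and symmetrically $v_{\psi_j}+\psi-\psi_j\in\mathcal K_\psi$. Inserting $\phi_j=v_\psi+\psi_j-\psi$ into the inequality for $v_{\psi_j}$ and $\phi=v_{\psi_j}+\psi-\psi_j$ into the inequality for $v_\psi$ and adding gives
\[
\int_\Om \bigl(\abs{\nabla v_{\psi_j}}^{p-2}\nabla v_{\psi_j}-\abs{\nabla v_\psi}^{p-2}\nabla v_\psi\bigr)\cdot\nabla(v_{\psi_j}-v_\psi)\ud x
\leq \int_\Om \bigl(\abs{\nabla v_{\psi_j}}^{p-2}\nabla v_{\psi_j}-\abs{\nabla v_\psi}^{p-2}\nabla v_\psi\bigr)\cdot\nabla(\psi_j-\psi)\ud x ,
\]
and with your uniform gradient bound, H\"older on the right and the elementary monotonicity inequality for $p\geq 2$ on the left yield $\nabla v_{\psi_j}\to\nabla v_\psi$ in $L^p$ directly; Poincar\'e applied to $(v_{\psi_j}-\psi_j)-(v_\psi-\psi)$ finishes. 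Once the test functions are corrected in this way, your compactness, Minty and uniform-convexity machinery becomes unnecessary: the whole sequence converges strongly in a few lines, which is the paper's (and Li--Martio's) argument.
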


\begin{proof} Use the test functions\footnote{Such a test function is
    out of the question in the parabolic case, because of
    complications with the time derivative.}
$$\phi_{j} = v_{\psi} + \psi_{j} - \psi \in  \mathcal K_{\psi_{j}},\quad \phi
= v_{\psi_{j}} + \psi - \psi_{j} \in \mathcal K_{\psi}$$
to prove this. See also Theorem 1.4 in Li--Martio \cite{lim94}.
\end{proof}

Let us leave the elliptic case and return to the parabolic situation.

\begin{lemma}\label{lem:global-int}
 Let $\psi\in L^p(0,T;W^{1,p}(\Om)),\ \spt\psi\Subset \Om_T,\ 0  \leq
\psi\leq L$, and let $w^*$ be the least solution with the obstacle  $\psi$. Then
$w^*$ is $p$-parabolic  in $\Om_T\setminus \spt \psi$ and  $w^* \in L^p(0,T;W^{1,p}_0(\Om))$.
\end{lemma}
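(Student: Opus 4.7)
The plan is three-fold, all exploiting that $\spt\psi \Subset \Om_T$, which means $\psi$ vanishes identically on an open neighborhood $V$ of $\partial_p \Om_T$: first, show $w^*$ is $p$-parabolic off $\spt\psi$ by a Poisson-modification comparison; second, use $\psi(x_0,t_0)=0$ for $(x_0,t_0) \in \partial_p \Om_T$ together with the barrier from the proof of Theorem~3.5 to force $w^* \to 0$ continuously on $\partial_p \Om_T$; third, combine these with a Caccioppoli energy estimate for the global integrability.

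For the first stage, fix a cylinder $Q \Subset \Om_T \setminus \spt\psi$ and let $w_P$ be the Poisson modification of $w^*$ in $Q$. Inside $Q$, $w_P$ is $p$-parabolic with $w_P = w^* \geq 0$ on $\partial_p Q$, so comparison with the zero solution gives $w_P \geq 0 = \psi$ in $Q$; outside $Q$, $w_P = w^* \geq \psi$. Hence $w_P$ is a lower semicontinuous weak supersolution in $\mathcal{S}_\psi$, forcing $w^* \leq w_P$, while always $w_P \leq w^*$. Thus $w^* = w_P$ is $p$-parabolic in $Q$, and since $Q$ was arbitrary, on all of $\Om_T \setminus \spt\psi$. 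Also, the constant $L$ lies in $\mathcal{S}_\psi$, so $0 \leq w^* \leq L$. Now $w^*$ is $p$-parabolic in $V$, and the barrier argument from the proof of Theorem~3.5, applied with $\psi(x_0,t_0) = 0$, yields $\limsup_{(x,t) \to (x_0,t_0)} w^*(x,t) \leq \eps$ for every $\eps > 0$, giving $w^* \to 0$ continuously on $\partial_p \Om_T$.

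For the global integrability, test the mollified supersolution inequality \eqref{eq:friedrichs-mollified} with the non-negative function $\vp = (L - w^*_\sigma)\chi^h_{0,T}\zeta^p$, where $\zeta \in C_0^\infty(\Om)$ is a spatial cutoff. Expanding $\nabla \vp$ and using the identity $\partial_t u_\sigma (L - u_\sigma) = \partial_t(L u_\sigma - u_\sigma^2/2)$, then sending $\sigma \to 0$ and $h \to 0$, the initial value $w^*(\cdot,0) = 0$ and the non-negativity of $L w^* - (w^*)^2/2$ control the temporal boundary contributions in the favorable direction, yielding
\[
\iint_{\Om_T} \zeta^p |\nabla w^*|^p \,dx\,dt \leq C L^p \iint_{\Om_T} |\nabla \zeta|^p \,dx\,dt.
\]
To push $\zeta$ up to the constant $1$ on $\Om$, exploit the continuous vanishing of $w^*$ on $\partial\Om \times [0,T]$, together with the $p$-parabolicity on $V$, which admits a boundary Caccioppoli in the thin strip near $\partial \Om$. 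Combining gives $|\nabla w^*|^p \in L^1(\Om_T)$, and with the continuous zero trace, $w^* \in L^p(0,T; W_0^{1,p}(\Om))$.

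The main obstacle is the last passage: a bare spatial-cutoff estimate would blow up as $\zeta$ tightens to the indicator of $\Om$, so one genuinely needs the continuous decay of $w^*$ at $\partial \Om \times [0,T]$, precisely what the barrier step supplies. The missing time derivative is a secondary technical difficulty, handled throughout by the mollifications of \eqref{eq:friedrichs-mollified} and \eqref{eq:naumann-convolution}.
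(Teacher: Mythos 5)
Your first two stages match the paper's toolkit: the Poisson-modification comparison off $\spt\psi$ is exactly how the paper settles $p$-parabolicity in $\Om_T\setminus\spt\psi$ (it refers back to the end of the proof of Theorem~\ref{thm:continuous}), and the barrier argument at $\partial_p\Om_T$ with $\psi(x_0,t_0)=0$ is legitimate here because $f\geq c>0$ on the compact set $\spt\psi$, so a fixed $\lambda$ with $\lambda c\geq L$ makes $\eps+\lambda f$ admissible in $\mathcal S_\psi$. For the global integrability, however, you take a genuinely different route from the paper: the paper never proves an energy estimate up to $\partial\Om$ at all, but instead solves an auxiliary $p$-parabolic Dirichlet problem in an outer cylinder $(\Om\setminus\ol D)\times(0,T)$ with data $w^*$ on $\partial D\times(0,T)$ and zero on the remaining parabolic boundary, identifies this solution with $w^*$ there by comparison, and lets $w^*$ inherit the membership in $L^p(0,T;W^{1,p}_0(\Om))$ from the auxiliary solution.

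The gap in your version is precisely the step you single out as the main obstacle: ``push $\zeta$ up to the constant $1$'' via ``a boundary Caccioppoli in the thin strip.'' As stated this is circular, because a Caccioppoli estimate up to $\partial\Om$ uses test functions that do not vanish on $\partial\Om$, which is only admissible once one already knows that $w^*$ has finite energy and zero lateral boundary values in the Sobolev sense --- the very conclusion. You say the continuous decay of $w^*$ at $\partial\Om\times[0,T]$ is what saves the day, but you never say how it enters the estimate. The missing device is a truncation: work with $(w^*-\delta)_+$, which by the barrier step (plus compactness of $\partial\Om\times[0,T]$) vanishes on a uniform strip $\{\dist(x,\partial\Om)<\rho_\delta\}\times(0,T)$, so that the corresponding test function, with a cutoff equal to $1$ near $\partial\Om$, is still compactly supported in $\Om_T$ and admissible in the subsolution inequality ($w^*$ being $p$-parabolic in the strip); the bounds are uniform in $\delta$, and monotone convergence as $\delta\to0$ gives both $\nabla w^*\in L^p$ up to $\partial\Om$ and, since $(w^*-\delta)_+(\cdot,t)\in W^{1,p}_0(\Om)$, the zero boundary values --- note that your final claim ``finite energy plus continuous zero trace implies $W^{1,p}_0$'' itself needs this (or an equivalent) argument, it is not free. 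Two smaller bookkeeping points in the interior estimate with $\vp=(L-w^*_\sigma)\chi^h_{0,T}\zeta^p$ in \eqref{eq:friedrichs-mollified}: the term produced by $\partial_t\chi^h_{0,T}$ near $t=T$ has the unfavorable sign and is only bounded by $CL^2|\Om|$ (it is the $t\approx0$ term that is favorable, by nonnegativity of $Lw^*-(w^*)^2/2$, not by $w^*(\cdot,0)=0$), so your displayed inequality must carry an extra additive term $CL^2|\Om|$; this is harmless for the conclusion.
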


\begin{proof}
 The first part of the proof is similar to the end of the proof of \theoref{thm:continuous}

To prove the \emph{global} integrability of $w^*$, we show that $w^*$
coincides with the solution to a boundary value problem near the
lateral boundary. To this end, we choose a smooth open set $D\subset
\Rn$ such that $\spt \psi\Subset D\times(t_1,t_2)$.  We solve the Evolutionary p-Laplace Equation \eqref{eq:weak-sol} in $(\Om\setminus \ol D) \times(0,T)$ with the boundary values
\[
\begin{split}
\begin{cases}
u&=w^* \quad \trm{on}\quad \partial D\times(0,T)\\
u&=0 \quad \trm{on}\quad  (\Om\setminus \ol D)\times\{0\}\\
u&=0   \quad \trm{on}\quad \partial \Om \times(0,T).
\end{cases}
\end{split}
\]
The continuity of $u$ and $w^*$ in $(\ol \Om\setminus D) \times(0,T)$ and the "elliptic" comparison principle, Proposition 3 in \cite{lindqvistm07} or Lemma 4.5 in \cite{kortekp10}, imply that the set  $\{u>w^*+\eps\}$ is empty for any $\eps>0$. Thus $u\le w^*+\eps$, and since $\eps>0$ was arbitrary, it follows that
\[
\begin{split}
u= w^* \quad \trm{in}\quad (\Om\setminus \ol D)\times(0,T).
\end{split}
\]
This implies the claim.
\end{proof}

Below we will use the averaged inequality  with
the convolution \eqref{eq:naumann-convolution}, cf. \cite{kinnunenl06}.
The averaged equation for a weak  supersolution $u$ in $\Om_T$ is the following
\begin{equation}
\label{eq:mollified-KL}
\begin{split}
\int\!\!\int_{\Om_T}&\bigg(\Big(\abs{\nabla u}^{p-2} \nabla u\Big)_\eps \cdot \nabla
\vp -u_\eps\parts{\vp}{t}\bigg)\ud x \ud t\\
&\hspace{5 em}+\int_\Om u_\eps(x,T) \vp (x,T) \ud x\\
& \geq \int_\Om u(x,0) \left(\frac{1}{\eps}\int_0^T \vp(x,s) e^{-s/\eps} \ud s \right) \ud x
\end{split}
\end{equation}
valid for all test functions $\vp \geq 0$ vanishing on the parabolic boundary $\partial_p \Om_T$. To see this, we observe that the definition of a supersolution gives us
\[
\begin{split}
\int_s^T\!\!\int_{\Om}& \bigg(\abs{\nabla u(x,t-s)}^{p-2} \nabla u(x,t-s) \cdot \nabla
\vp(x,t)\\
& \hspace{1 em}-u(x,t-s)\parts{\vp}{t}(x,t)\bigg)\ud x \ud t\ +\ \int_\Om u(x,T-s) \vp (x,T) \ud x\\
&\geq \int_\Om u(x,0) \vp (x,s) \ud x,
\end{split}
\]
when $0\leq s\leq T$. Notice that $(x,t-s)\in \ol \Om_T$. To obtain \eqref{eq:mollified-KL} we multiply the above inequality by $e^{-s/\eps}/\eps$, integrate over $[0,T]$ with respect to $s$, and finally change the order of integration to obtain.
Upon integration by parts we see that for a supersolution $u\in L^p(0,T;W^{1,p}(\Om))$  inequality \eqref{eq:mollified-KL} implies
\begin{equation}
\label{eq:mollified}
\begin{split}
\int\!\!\int_{\Om_T}&\bigg(\Big(\abs{\nabla u}^{p-2} \nabla u\Big)_\eps \cdot \nabla
\vp +\parts{u_\eps}{t}\vp\bigg)\ud x \ud t\\
& \geq \int_\Om u(x,0) \left(\frac{1}{\eps}\int_0^T \vp(x,s) e^{-s/\eps} \ud s \right) \ud x
\end{split}
\end{equation}
for every $\vp \in  C(\ol \Om_T)\cap C^{\infty}({\Om_T}),\, \vp\geq
0$,
vanishing on the parabolic boundary $\partial_p \Om_T$.

We will use only the simpler version
\begin{equation}
\label{eq:naumann-zero-rhs}
\int\!\!\int_{\Om_T}\bigg(\Big(\abs{\nabla u}^{p-2} \nabla u\Big)_\eps \cdot \nabla
\vp +\parts{u_\eps}{t}\vp\bigg)\ud x \ud t \geq 0
\end{equation}
valid for $u \geq 0$ and $\vp$ vanishing on $\partial_p \Om_T$.

By approximating an irregular obstacle $\psi$ by the  mollified obstacles
 $\psi_{\eps}$ and solving the corresponding variational  problems, we
 arrive at the least solution as a limit. This is the content of
Theorem~\ref{thm:uniqueness}. However, arbitrary smooth
 approximations to the obstacle will not work; we use convolutions.
 The key observation in the proof of Theorem~\ref{thm:uniqueness} is
that we can,  without affecting the limit of the approximation,
 replace the obstacle by the least supersolution above the
 obstacle. We start with an auxiliary result.
\begin{lem}
\label{lem:ordered-obstacles}
Suppose that $\psi^u,\psi^v\in L^p(0,T;W^{1,p}_0(\Om))$ and define $\psi^u_\eps, \psi^v_\eps$ as in formula \eqref{eq:naumann-convolution}.
Let $u$ and  $v$ be the variational solutions with $\psi^u_\eps$ and $\psi^v_\eps$. If $\psi^u_\eps\geq \psi^v_\eps$, then $u\geq v$ almost everywhere.
\end{lem}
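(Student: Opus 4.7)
The natural attack is to subtract the two variational inequalities after plugging in ordered comparison functions and exploit the monotonicity of the $p$-Laplacian on the set $\{v>u\}$. Concretely, I would aim at using $\phi_u=\max(u,v)$ as the test function in the inequality satisfied by $u$, and $\phi_v=\min(u,v)$ in the inequality satisfied by $v$. The admissibility with respect to the obstacles comes for free from the given ordering: $\max(u,v)\geq u\geq\psi^u_\eps$, and $\min(u,v)\geq\min(\psi^u_\eps,\psi^v_\eps)=\psi^v_\eps$ because $\psi^u_\eps\geq\psi^v_\eps$. With these choices $\phi_u-u=(v-u)_+$ and $\phi_v-v=-(v-u)_+$, so the elliptic parts add up to
\[
\int\!\!\int_{\Om_T}\bigl(|\nabla u|^{p-2}\nabla u-|\nabla v|^{p-2}\nabla v\bigr)\cdot\nabla(v-u)_+\,\ud x\ud t,
\]
which is non-positive by the monotonicity inequality for the $p$-Laplacian, being integrated only where $v>u$.

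For the time terms, I would use the identities $\max(u,v)+\min(u,v)=u+v$ and the chain rule $(v-u)_+\,\partial_t(v-u)_+=\tfrac12\partial_t\bigl((v-u)_+\bigr)^2$. After adding the two time contributions $(v-u)_+\partial_t\max(u,v)-(v-u)_+\partial_t\min(u,v)$, the middle cancels and I get $\tfrac12\bigl[((v-u)_+)^2(x,T)-((v-u)_+)^2(x,0)\bigr]$; the initial term vanishes because $u(x,0)=v(x,0)=0$ (both solutions lie above compactly supported obstacles and equal $\psi$ on the parabolic boundary by \lemref{lem:global-int}). The two $\tfrac12\int\abs{(v-u)_+(x,T)}^2\ud x$ right-hand contributions from the two variational inequalities then dominate this boundary term, and I end up with
\[
0\geq \int\!\!\int_{\Om_T}\bigl(|\nabla v|^{p-2}\nabla v-|\nabla u|^{p-2}\nabla u\bigr)\cdot\nabla(v-u)\,\chi_{\{v>u\}}\,\ud x\ud t\geq \tfrac12\int_\Om((v-u)_+(x,T))^2\ud x,
\]
which forces $\nabla(v-u)_+\equiv 0$ and $(v-u)_+(\cdot,T)\equiv 0$, hence $v\leq u$ almost everywhere.

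The main obstacle is that $\max(u,v)$ and $\min(u,v)$ are not themselves admissible test functions in \defiref{def:smooth}: they are neither smooth nor equipped with an $L^q$ time derivative, and truncations of Sobolev functions do not in general have weak $t$-derivatives. I would overcome this exactly as in \theoref{thm:smooth-unique}: replace the candidate test functions by their Naumann convolutions with a small added slack, i.e.
\[
\phi_u^{\sigma,\alpha}=\bigl(\max(u,v)+\alpha\eta\bigr)_\sigma,\qquad \phi_v^{\sigma,\alpha}=\bigl(\min(u,v)+\alpha\eta\bigr)_\sigma,
\]
with $\eta\in C_0^\infty(\Om)$ equal to $1$ on $\spt\psi^u\cup\spt\psi^v$. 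The extra $\alpha\eta$ absorbs the failure of the Naumann convolution to preserve the obstacle inequality, exactly as in the proof of \theoref{thm:smooth-unique}; the properties collected in \lemref{lem:naumann} give $\phi_u^{\sigma,\alpha}\geq\psi^u_\eps$ and $\phi_v^{\sigma,\alpha}\geq\psi^v_\eps$ for $\sigma=\sigma(\alpha)$ small, an $L^q$ time derivative, and strong convergence of gradients as $\sigma\to 0$.

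Finally, I would pass to the limit first $\sigma\to 0$, then $\alpha\to 0$, to recover the unmollified identities above. The time derivative term produced by the convolution has the same non-positive sign structure
\[
-\frac{1}{\sigma}\int\!\!\int_{\Om_T}\Bigl[\bigl(\tfrac{u+v}{2}+\alpha\eta\bigr)-\bigl(\tfrac{u+v}{2}+\alpha\eta\bigr)_\sigma\Bigr]\cdot(\cdots)\,\ud x\ud t
\]
encountered in \theoref{thm:smooth-unique}, together with an $O(\alpha)$ piece that is harmless in the limit $\alpha\to 0$. Once this limit is safely taken, the monotonicity inequality and the boundary square give the claim. The whole argument is essentially the comparison argument of \theoref{thm:smooth-unique} rerun with two different obstacles; the ordering hypothesis $\psi^u_\eps\geq\psi^v_\eps$ is used exactly once, to certify admissibility of $\min(u,v)$ against $\psi^v_\eps$.
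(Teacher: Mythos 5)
Your plan is the classical truncation comparison, and at the formal level the algebra is right; but the step where you claim the regularized time terms have ``the same non-positive sign structure'' as in \theoref{thm:smooth-unique} does not go through, and this is precisely where the difficulty of the lemma lies. In \theoref{thm:smooth-unique} the same convolved function is tested in \emph{both} inequalities, so the time contribution is $\int\!\!\int(\phi-\tfrac{u+v}{2})\parts{\phi}{t}$ with $\phi-(\tfrac{u+v}{2}+\alpha\eta)=-\eps\,\parts{\phi}{t}$, i.e.\ a complete square with a favorable sign plus an $O(\alpha)$ piece. In your scheme the two inequalities are tested with \emph{different} functions, $\phi_u=(\max(u,v)+\alpha\eta)_\sigma$ and $\phi_v=(\min(u,v)+\alpha\eta)_\sigma$, and the sum of the time terms produces, besides the good terms, the cross term
\[
\int\!\!\int_{\Om_T}(v-u)_+\,\parts{}{t}\bigl(\abs{v-u}\bigr)_\sigma\ud x\ud t
=\frac{1}{\sigma}\int\!\!\int_{\Om_T}\bigl((v-u)_+-((v-u)_+)_\sigma\bigr)^2\ud x\ud t+\dots,
\]
in which the displayed quadratic enters with the \emph{unfavorable} sign and is not absorbed by the terms $-\sigma\int\!\!\int\abs{\parts{\phi_u}{t}}^2$, $-\sigma\int\!\!\int\abs{\parts{\phi_v}{t}}^2$ (the obvious absorption loses a factor $2$), nor does it vanish as $\sigma\to0$: for $u,v$ with no time regularity $\tfrac1\sigma\|f-f_\sigma\|_{L^2}^2$ need not tend to zero. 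Equivalently, your chain-rule identity $(v-u)_+\parts{}{t}(v-u)_+=\tfrac12\parts{}{t}((v-u)_+)^2$ is integrated over the non-cylindrical set $\{v>u\}$ and presupposes a time derivative of $u$ and $v$ that the variational solutions with the obstacles $\psi^u_\eps,\psi^v_\eps$ are not known to possess --- this is exactly the missing ingredient the lemma must circumvent, and asserting the limit ``as in \theoref{thm:smooth-unique}'' does not supply it. A secondary gap: the admissibility device $\alpha\eta$ with $\eta\equiv1$ on $\spt\psi^u\cup\spt\psi^v$ relies on uniform convergence of the convolution (\lemref{lem:naumann}~(vi)), available only for continuous obstacles, and on compact spatial support; under the hypotheses of the lemma $\psi^u$ is merely in $L^p(0,T;W^{1,p}_0(\Om))$ (in the application $\psi^u=w^*$, which has no compact spatial support), so neither the choice of $\eta$ nor the pointwise bound $(\psi^u_\eps+\alpha\eta)_\sigma\geq\psi^u_\eps$ is justified.

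For comparison, the paper avoids testing at the level of $u$ and $v$ altogether: it mollifies $\psi^u_\eps,\psi^v_\eps$ also in space, so that the obstacles are smooth and the ordering of the corresponding solutions $u^\sigma\geq v^\sigma$ follows either from \corref{cor:continuous-ordered-obstacles} or from the Alt--Luckhaus/B\"ogelein--Duzaar--Mingione theory, where the solutions do have time derivatives in the dual space and your truncation test functions $u^\sigma+(v^\sigma-u^\sigma)_+$, $v^\sigma-(v^\sigma-u^\sigma)_+$ are legitimately admissible. It then passes to the limit $\sigma\to0$ using the energy estimate from the variational inequality, the compactness of \theoref{thm:superparab-compactness}, and the uniqueness theorem of B\"ogelein--Duzaar--Mingione to identify the limits with $u$ and $v$. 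If you want to salvage your direct approach, you would have to run the truncation argument at a level where the time derivative is available (as the paper does for $u^\sigma,v^\sigma$), or replace the mollified-test-function device by the averaged inequality \eqref{eq:mollified-KL}--\eqref{eq:naumann-zero-rhs} for the equations themselves; as written, the passage $\sigma\to0$ in your time terms is a genuine gap.
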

\begin{proof}
   First we extend $\psi^u$
  and $\psi^v$ by zero outside $\Om$. Then we mollify the
  obstacles $\psi^u_\eps$ and $\psi^v_\eps$ in space using the
  standard Friedrichs' mollifier with parameter $\sigma$.

\sloppy
We solve the variational obstacle problem in $\Om\times(0,T)$ with $\psi^u_{\eps,\sigma},\psi^v_{\eps,\sigma} \in C^\infty(\ol \Om_T)$.
Since the obstacles are smooth and ordered, we conclude from Corollary 3.16 that  $u^\sigma, v^\sigma$ are weak supersolutions and
\begin{equation}
\label{eq:approximants-in-order}
\begin{split}
  v^\sigma\leq u^\sigma
\end{split}
\end{equation} almost everywhere. The corollary is formulated for $C_0$-obstacles, but it can be modified to the present setting as well.
Alternatively, according to \cite{altl83}, \cite{bogeleindm09}, variational solutions $u^\sigma$
$v^\sigma$ exist, attain the boundary values  in
$L^p(0,T;W^{1,p}_0(\Om))$  prescribed by the obstacles, and have time derivatives in the dual space.  Thus $u^\sigma ,v^\sigma$ turn out to be supersolutions, and we can use $u^\sigma+(v^\sigma-u^\sigma)_+$ as a test function
for $u^\sigma$ and $v^\sigma-(v^\sigma-u^\sigma)_+$ for
$v^\sigma$ to deduce the same result.

Next we establish the needed convergence results. Observe that
\begin{equation}
\label{eq:variational-inequality}
\begin{split}
\int\!\!\int_{\Om_T} \Big(\abs{\nabla u^\sigma}^{p-2} \nabla u^\sigma &\cdot \nabla
(\psi^u_{\eps,\sigma}-u^\sigma)\\
&+(\psi^u_{\eps,\sigma}-u^\sigma) \parts{\psi^u_{\eps,\sigma}}{t}\Big) \ud x \ud t\geq 0
\end{split}
\end{equation}
gives us the global estimate
\[
\begin{split}
\int\!\!\int_{\Om_T} \abs{\nabla u^\sigma}^p \ud x \ud t \le C \int\!\!\int_{\Om_T} \abs{\nabla \psi^u_{\eps,\sigma}}^p  \ud x \ud t+  C \int\!\!\int_{\Om_T} \abs{\parts{\psi^u_{\eps,\sigma}}{t}}\! \ud x \ud t.
\end{split}
\]
 This uniform bound with respect to $\sigma$  implies that a
 subsequence of  $u^\sigma$  converges weakly in $L^p(0,T;W^{1,p}(\Om))$ to some limit $\tilde u$. Furthermore, \theoref{thm:superparab-compactness} gives
us a pointwise convergence of $u^\sigma$ and $\nabla u^\sigma$ to
$\tilde u$ and $\nabla  \tilde u$. This is enough to pass to a limit
under the integral sign in \eqref{eq:variational-inequality}. It follows that $\tilde u$ is a weak supersolution.

 Since $\psi^u_{\eps,\sigma}-u^\sigma\in L^p(0,T;W^{1,p}_0(\Om))$ we deduce that
\[
\begin{split}
\psi^u_{\eps}-\tilde u\in L^p(0,T;W^{1,p}_0(\Om)).
\end{split}
\]
 This is enough for using the uniqueness from Theorem 6.1 in
 \cite{bogeleindm09} to conclude that $\tilde u$ is the unique
 variational solution  with the obstacle $\psi^\eps_u$. In other words
   $\tilde u = u.$ We complete the proof by combining this result and \eqref{eq:approximants-in-order}.
\end{proof}

The previous proof contains the following result.
\begin{corollary}
\label{cor:supersol}
 Let $\psi\in L^p(0,T;W^{1,p}(\Omega))$ and define $\psi_\eps$ as in formula \eqref{eq:naumann-convolution}. Then the variational solution $u$ with the obstacle $\psi_\eps$ is a supersolution.
\end{corollary}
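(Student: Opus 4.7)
The plan is to harvest the statement directly from the construction carried out in the proof of \lemref{lem:ordered-obstacles}; only a small amount of bookkeeping remains. Given $\psi \in L^p(0,T;W^{1,p}(\Omega))$, extend $\psi$ by zero outside $\Omega$, form the time convolution $\psi_\eps$ as in \eqref{eq:naumann-convolution}, and then mollify in the space variable with a Friedrichs kernel of parameter $\sigma$ to obtain $\psi_{\eps,\sigma} \in C^\infty(\ol\Om_T)$ with $\psi_{\eps,\sigma} \to \psi_\eps$ in $L^p(0,T;W^{1,p}(\Om))$ as $\sigma \to 0$. Because $\psi_\eps$ is compactly supported in $\Omega$ for small $\sigma$, we may regard $\psi_{\eps,\sigma}$ as a continuous obstacle of the type covered by \corref{cor:smooth-variational=least}.

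For each $\sigma>0$, let $u^\sigma$ denote the variational solution with obstacle $\psi_{\eps,\sigma}$. By \corref{cor:smooth-variational=least} (extended, as indicated after it, to smooth obstacles), $u^\sigma$ coincides with the least solution $w^*$ associated with $\psi_{\eps,\sigma}$, and is therefore a weak supersolution by \theoref{thm:obstacle-sol-supersol}. Inserting $\psi_{\eps,\sigma}$ as a test function in the variational inequality for $u^\sigma$ and applying Young's inequality yields
\[
\int\!\!\int_{\Om_T}\abs{\nabla u^\sigma}^p\ud x\ud t \le C\int\!\!\int_{\Om_T}\abs{\nabla \psi_{\eps,\sigma}}^p\ud x \ud t + C\int\!\!\int_{\Om_T}\abs{\parts{\psi_{\eps,\sigma}}{t}}\ud x \ud t,
\]
where the right-hand side stays bounded as $\sigma \to 0$ thanks to the properties of the convolution \eqref{eq:naumann-convolution} in \lemref{lem:naumann}.

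With this uniform bound in hand, \theoref{thm:superparab-compactness} delivers a subsequence along which $u^\sigma \to \tilde u$ and $\nabla u^\sigma \to \nabla \tilde u$ almost everywhere in $\Om_T$, and $\tilde u$ is itself a weak supersolution. Moreover, since $\psi_{\eps,\sigma}-u^\sigma \in L^p(0,T;W^{1,p}_0(\Om))$ for each $\sigma$, one concludes that $\psi_\eps - \tilde u \in L^p(0,T;W^{1,p}_0(\Om))$ so that $\tilde u$ attains the correct lateral values. Finally, the uniqueness statement of Theorem 6.1 in \cite{bogeleindm09}, valid because $\psi_\eps$ has a time derivative in $L^p$, forces $\tilde u = u$. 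Hence $u$ is a weak supersolution, as claimed.

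The main obstacle is the passage to the limit $\sigma\to 0$ inside the nonlinear supersolution inequality; that step, however, is already carried out in the proof of \lemref{lem:ordered-obstacles} and requires nothing beyond the compactness provided by \theoref{thm:superparab-compactness}.
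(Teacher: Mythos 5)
Your proposal is correct and takes essentially the same route as the paper, which simply notes that the corollary is contained in the proof of \lemref{lem:ordered-obstacles}: space-mollify $\psi_\eps$, use that the variational solutions with smooth obstacles are weak supersolutions (via \corref{cor:smooth-variational=least} suitably modified, or the results of \cite{altl83}, \cite{bogeleindm09}), obtain the uniform energy bound by testing with the obstacle, pass to the limit by \theoref{thm:superparab-compactness}, and identify the limit with $u$ through the uniqueness theorem of \cite{bogeleindm09}. The only small slip is your claim that $\psi_\eps$ is compactly supported, which need not hold under the stated hypothesis, but it is inessential since the extension of \corref{cor:smooth-variational=least} to smooth non-compactly-supported obstacles is exactly the modification the paper itself invokes.
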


The next theorem shows that, if the obstacle itself is  a
 supersolution, then the approximation gives the same supersolution at
 the limit.

\begin{theo}
\label{thm:supersolution-obstacle}
Let $w\in L^p(0,T;W^{1,p}(\Om)),\ 0\leq w\leq L,$  be a weak supersolution
and define $w_\eps$ as in formula \eqref{eq:naumann-convolution}.  Let $v^\eps$ be the variational solutions with the mollified obstacles  $w_\eps$. Then, passing to a subsequence if necessary,
\[
\begin{split}
\nabla v^\eps \rightarrow \nabla w \quad \text{in} \quad L^p(\Om_T),
\end{split}
\]
$$v^\eps \to w, \quad \nabla v^\eps \to \nabla w \quad \textrm{a.e.\ in}\quad
 \Omega_T.$$
\end{theo}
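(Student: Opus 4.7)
The strategy is compactness followed by identification of the limit. By \corref{cor:supersol} each $v^\eps$ is a weak supersolution with $w_\eps\leq v^\eps$ a.e., and by comparison with the constants $0$ and $L$ we have $0\leq v^\eps\leq L$. To obtain a uniform energy bound I would test the obstacle inequality for $v^\eps$ with (a suitable time-regularization of) $w_\eps$ itself---admissible as an obstacle-level competitor since $\partial_t w_\eps=(w-w_\eps)/\eps\in L^p$ by \lemref{lem:naumann}(i), the time-boundary contribution being absorbed via the cut-off $\chi^h_{0,T}$ from \eqref{eq:cut-off}. Applying Young's inequality to the principal term would yield
\[
\int\!\!\int_{\Om_T}\abs{\nabla v^\eps}^p\ud x\ud t \leq C\left(\int\!\!\int_{\Om_T}\abs{\nabla w_\eps}^p\ud x\ud t + \int\!\!\int_{\Om_T}\abs{\partial_t w_\eps}\abs{v^\eps-w_\eps}\ud x\ud t\right),
\]
and in view of $0\leq v^\eps,w_\eps\leq L$ and \lemref{lem:naumann}(ii) both right-hand terms are uniformly bounded in $\eps$. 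Applying \theoref{thm:superparab-compactness} to the resulting bounded sequence of supersolutions, I extract a subsequence and a weak supersolution $\tilde w$ with $v^\eps\to\tilde w$ and $\nabla v^\eps\to\nabla\tilde w$ a.e.\ in $\Om_T$. Since $v^\eps\geq w_\eps\to w$ in $L^p$ by \lemref{lem:naumann}(i), the lower bound $\tilde w\geq w$ is immediate.

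The main obstacle is the reverse inequality $\tilde w\leq w$. In the elliptic case (\propref{prop:elliptic-approximation}) one uses $\phi=v_\psi+\psi_j-\psi$, but in the parabolic setting the time derivative of such a competitor is out of reach, and $w$ itself need not dominate $w_\eps$ pointwise, so that $w$ is not automatically an admissible competitor in the obstacle problem for $w_\eps$. I would therefore test the variational inequality for $v^\eps$ with (a Naumann-regularization of) $\phi=\max(v^\eps,w)$, which is admissible since $\phi\geq v^\eps\geq w_\eps$. This yields control of the integrand $\abs{\nabla v^\eps}^{p-2}\nabla v^\eps\cdot(\nabla w-\nabla v^\eps)$ over the set $\{w>v^\eps\}$, and I would couple the resulting bound with the averaged supersolution inequality \eqref{eq:mollified} for $w$ tested against $(v^\eps-w)_+$, handling the time term via the identity $\partial_t w_\eta=(w-w_\eta)/\eta$. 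Sending first $\eta\to 0$ and then $\eps\to 0$ along the subsequence, the remaining integral should collapse to
\[
\int\!\!\int_{\Om_T}\bigl(\abs{\nabla\tilde w}^{p-2}\nabla\tilde w-\abs{\nabla w}^{p-2}\nabla w\bigr)\cdot\nabla(\tilde w-w)_+\ud x\ud t\leq 0,
\]
and the strict monotonicity of the $p$-Laplacian then forces $(\tilde w-w)_+\equiv 0$.

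Once $\tilde w=w$ a.e.\ has been established, the a.e.\ convergence of gradients from the compactness step, together with the coercivity inequality $\bigl(\abs{\nabla v^\eps}^{p-2}\nabla v^\eps-\abs{\nabla w}^{p-2}\nabla w\bigr)\cdot(\nabla v^\eps-\nabla w)\geq c\abs{\nabla v^\eps-\nabla w}^p$, valid for $p\geq 2$, combined with the uniform energy estimate and Vitali's theorem, upgrades the a.e.\ convergence to $\nabla v^\eps\to\nabla w$ in $L^p(\Om_T)$. The delicate point throughout is the upper bound: one must orchestrate the Naumann mollifications in both inequalities so that the additional terms arising from the mismatch between $w$ and $w_\eps$ either cancel or carry favorable signs in the limit, which is precisely what is unavailable in the elliptic argument.
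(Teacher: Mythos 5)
Your compactness step and your identification of the crux are right, and your uniform energy bound and the final Vitali upgrade are unobjectionable; but the mechanism you propose for the reverse inequality does not close. The inequality you want in the limit involves $\nabla(\tilde w-w)_+$, i.e.\ it must come from gradient information on the set $\{v^\eps>w\}$ from \emph{both} inequalities. Testing the variational inequality for $v^\eps$ with (a regularization of) $\max(v^\eps,w)$ only produces the pairing $\abs{\nabla v^\eps}^{p-2}\nabla v^\eps\cdot\nabla(w-v^\eps)_+$ on the \emph{other} set $\{w>v^\eps\}$, while your supersolution inequality for $w$ tested with $(v^\eps-w)_+$ lives on $\{v^\eps>w\}$; the two cannot be summed into the monotone quantity $\bigl(\abs{\nabla v^\eps}^{p-2}\nabla v^\eps-\abs{\nabla w}^{p-2}\nabla w\bigr)\cdot\nabla(v^\eps-w)_+$. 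To get the missing half on $\{v^\eps>w\}$ from the variational side you would need the competitor $\min(v^\eps,w)=v^\eps-(v^\eps-w)_+$, which is exactly what is inadmissible, since $w\geq w_\eps$ fails pointwise in general (and $\max(v^\eps,w)$ also has a boundary-value problem: it need not agree with $w_\eps$ on $\partial_p\Om_T$). The "orchestration" of the time terms that you defer is not a technicality but the whole point, and your splitting into positive and negative parts blocks the cancellation that makes it possible.

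The paper's proof avoids truncations altogether: test the variational inequality for $v^\eps$ with the obstacle $w_\eps$ itself (admissible because $\parts{w_\eps}{t}=(w-w_\eps)/\eps\in L^p$), and test the averaged supersolution inequality \eqref{eq:naumann-zero-rhs} for $w$ with the nonnegative function $v^\eps-w_\eps$, which vanishes on $\partial_p\Om_T$. Adding the two, the time terms $\pm\parts{w_\eps}{t}(v^\eps-w_\eps)$ cancel \emph{exactly}, leaving \eqref{eq:key-estimate}. One then adds and subtracts $\abs{\nabla w_\eps}^{p-2}\nabla w_\eps$, uses the coercivity bound $2^{2-p}\abs{\nabla(v^\eps-w_\eps)}^p$ and Young's inequality to absorb, and the remaining commutator-type term $\norm{(\abs{\nabla w}^{p-2}\nabla w)_\eps-\abs{\nabla w_\eps}^{p-2}\nabla w_\eps}_{L^q}\to 0$ by Lemma~\ref{lem:naumann}. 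This yields $\nabla(v^\eps-w_\eps)\to 0$ in $L^p(\Om_T)$ directly, hence $\nabla v=\nabla w$ a.e., and $v=w$ then follows from $w-v\in L^p(0,T;W^{1,p}_0(\Om))$; no separate inequalities $\tilde w\geq w$ and $\tilde w\leq w$, and no positive-part test functions, are needed. If you want to salvage your outline, replace the $\max/\min$ competitors by this single pairing; as written, the key limit inequality you assert is not derivable from the tests you propose.
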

\begin{proof}

 By \corref{cor:supersol}, $v^\eps$ is a weak supersolution and further $0\le v^\eps\le L$.
According to \theoref{thm:superparab-compactness}, there exists a subsequence, still denoted by $v^\eps$, and a limit $v$ such that
$$v^\eps \to v, \quad \nabla v^\eps \to \nabla v\quad \textrm{a.e.\ in}\quad
 \Omega_T.$$
Thus we have  to show that $v=w$ almost everywhere. To this end,
 observe that the obstacle $w_\eps$ is an admissible test function for $v^\eps$ and write
\[
\begin{split}
\int\!\!\int_{\Om_T} &\left(\abs{\nabla v^\eps}^{p-2} \nabla v^\eps \cdot \nabla
(w_\eps-v^\eps)+(w_\eps-v^\eps) \parts{w_\eps}{t}\right) \ud x \ud t \geq 0.
\end{split}
\]
On the other hand, since $w \geq 0$ is a weak supersolution and $v^\eps\geq w_\eps$, we have by \eqref{eq:naumann-zero-rhs} that
\[
\begin{split}
\int\!\!\int_{\Om_T}\Big(\left(\abs{\nabla w}^{p-2} \nabla w\right)_\eps & \cdot \nabla
(v^\eps-w_\eps)+(v^\eps-w_\eps) \parts{w_\eps}{t}\Big) \ud x \ud t\geq0.
\end{split}.
\]
Since
 $v^\eps$ takes the boundary values on the parabolic boundary $\partial_p \Om_T$ in a suitable sense  an approximation argument justifies our use of $v^\eps-w_\eps$ as a test function in \eqref{eq:naumann-zero-rhs}.

We sum up the inequalities to obtain
\begin{equation}
\label{eq:key-estimate}
\begin{split}
\int\!\!\int_{\Om_T} &\left(\abs{\nabla v^\eps}^{p-2} \nabla v^\eps -\left(\abs{\nabla w}^{p-2} \nabla w\right)_\eps \right)\cdot\nabla
(w_\eps-v^\eps) \ud x \ud t \geq 0.
\end{split}
\end{equation}

Next we aim at passing to the limit under the integral sign  in order
to deduce that $v^\eps\to w$. We write
\[
\begin{split}
\int\!\!\int_{\Om_T}& \left(\abs{\nabla v^\eps}^{p-2} \nabla v^\eps - \abs{\nabla w_\eps}^{p-2} \nabla w_\eps\right)\cdot\nabla
(v^\eps-w_\eps) \ud x \ud t \\
&\leq
\int\!\!\int_{\Om_T} \left(\left(\abs{\nabla w}^{p-2} \nabla w\right)_\eps
- \abs{\nabla w_\eps}^{p-2} \nabla w_\eps \right)\cdot\nabla
(v^\eps - w_\eps) \ud x \ud t \\
& \leq
\frac{\alpha^p}{p}\int\!\!\int_{\Om_T} |\nabla(v^\eps - w_\eps)|^{p}
\ud x \ud t\\
&\hspace{1 em}+\frac{1}{q\alpha^q}\int\!\!\int_{\Om_T} |\left(|\nabla w|^{p-2} \nabla w\right)_\eps
-  |\nabla w_\eps|^{p-2}\nabla w_\eps|^{q} \ud x \ud t,
\end{split}
\]
where Young's inequality was used for $\alpha > 0$ and $q = p/(p-1).$
The integrand in the left-hand side is greater than
$$2^{2-p}|\nabla(v^\eps - w_\eps)|^{p}$$ and we fix $\alpha$ so small
that the integral of this minorant can absorb the first integral on
the right-hand side. In other words
\[
\begin{split}
\int\!\!\int_{\Om_T}& |\nabla(v^\eps - w_\eps)|^{p}
\ud x \ud t\\
& \leq
C(p)\int\!\!\int_{\Om_T} |\left(|\nabla w|^{p-2} \nabla w\right)_\eps
-  |\nabla w_\eps|^{p-2}\nabla w_\eps|^{q} \ud x \ud t.
\end{split}
\]
As $\eps \rightarrow 0$ the majorant vanishes and we arrive at
\begin{equation}
\int\!\!\int_{\Om_T} |\nabla(v - w)|^{p}
\ud x \ud t \leq \lim_{\eps\to0} \int\!\!\int_{\Om_T} |\nabla(v^\eps - w_\eps)|^{p}
\ud x \ud t = 0,
\end{equation}
where Fatou's lemma was used.

It follows  that $\nabla v=\nabla w$ a.e. in $\Om_T$. We assure that $w-v\in L^p(0,T;W^{1,p}_0(\Om))$  similarly as at the end of the proof of \lemref{lem:ordered-obstacles}, and the proof is complete.
\end{proof}

From the previous theorem we can deduce that the variational solutions with the mollified obstacles converge to the least solution.
\begin{theo}
\label{thm:uniqueness}
Let $\psi\in L^p(0,T;W^{1,p}(\Om)),\ \spt\psi\Subset \Om_T,\ 0  \leq
\psi\leq L$, and let $u^\eps$ be the variational solutions with the mollified obstacles $\psi_\eps$.
Let $w^*$ denote the least solution with the obstacle $\psi$.
 Then
$$u^\eps \to w^*, \quad \nabla u^\eps \to \nabla w^*\quad \textrm{a.e.\ in}\quad
\Omega_T.$$
\end{theo}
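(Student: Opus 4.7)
The plan is to sandwich $u^\eps$ between $\psi_\eps$ (from below, via the obstacle condition) and a companion sequence $\tilde u^\eps$ (from above) built with a smoother obstacle, then identify the common limit with $w^*$ using Theorem 4.9 for the upper envelope and the infimum property of $w^*$ for the lower bound.

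To begin, I would observe that by Theorem 2.8 the least solution $w^*$ is itself a weak supersolution with $0\le w^*\le L$ (comparing with the constant $L$) and $w^*\ge \psi$ a.e. Hence $w^*$ satisfies the hypotheses of Theorem 4.9, so the variational solutions $\tilde u^\eps$ with mollified obstacle $w^*_\eps$ satisfy $\tilde u^\eps\to w^*$ and $\nabla\tilde u^\eps\to \nabla w^*$ a.e.\ along a subsequence. Since the Naumann convolution is a positive linear operator acting on the time variable only, the pointwise inequality $w^*\ge \psi$ mollifies to $w^*_\eps\ge \psi_\eps$ a.e., and Lemma 4.7 yields
$$u^\eps \le \tilde u^\eps \qquad \text{a.e. in }\Omega_T.$$

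Next, I would extract a supersolution limit for $u^\eps$ itself. By Corollary 4.8 each $u^\eps$ is a weak supersolution, and the bound $0\le \psi_\eps\le L$ (preserved by the Naumann convolution) together with comparison gives $0\le u^\eps\le L$. Theorem 2.5 then delivers a further subsequence and a weak supersolution $u$ with $u^\eps\to u$ and $\nabla u^\eps\to \nabla u$ a.e. Passing to the limit in $u^\eps\ge \psi_\eps$ and using that $\psi_\eps\to \psi$ in $L^p$ (hence a.e.\ after extracting one more subsequence) yields $u\ge \psi$ a.e. The $\essliminf$-regularization of $u$ therefore lies in $\mathcal S_\psi$, so the definition of $w^*$ as the pointwise infimum of that class forces $u\ge w^*$ a.e. On the other hand, the sandwich bound together with $\tilde u^\eps\to w^*$ a.e.\ gives $u\le w^*$ a.e., whence $u = w^*$ and $\nabla u = \nabla w^*$ a.e.

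Since the limit is independent of the subsequence, a standard Urysohn-type argument upgrades this to a.e.\ convergence of the full sequences $u^\eps$ and $\nabla u^\eps$. The main technical obstacle in this plan is the \emph{order-preservation} step: one needs the Naumann convolution to propagate both $w^*\ge \psi$ to $w^*_\eps\ge \psi_\eps$ (which follows from positivity of the kernel) and the inequality $u^\eps\ge \psi_\eps$ through the simultaneous limit in $\eps$, so that the a.e.\ limit of $u^\eps$ genuinely sits above $\psi$ and qualifies as a member of $\mathcal S_\psi$. Once this is secured, everything else is a reassembly of Theorem 4.9 (upper companion), Theorem 2.5 (compactness of $u^\eps$), Corollary 4.8 (supersolution property), and the infimum characterization of $w^*$.
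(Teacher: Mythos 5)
Your argument is, in structure, exactly the paper's proof: compactness (Theorem~\ref{thm:superparab-compactness}) applied to the supersolutions $u^\eps$ (Corollary~\ref{cor:supersol}) gives a supersolution limit $u\geq\psi$ a.e., hence $u\geq w^*$ by the infimum property; the companion solutions with obstacle $w^*_\eps$ converge to $w^*$ by Theorem~\ref{thm:supersolution-obstacle}, and the ordering $w^*_\eps\geq\psi_\eps$ plus Lemma~\ref{lem:ordered-obstacles} gives $u^\eps\leq \tilde u^\eps$, hence $u\leq w^*$. The one place where your write-up is thinner than the paper is the claim that ``$w^*$ satisfies the hypotheses of Theorem 4.9'': you verify boundedness and the supersolution property, but that theorem also requires $w^*\in L^p(0,T;W^{1,p}(\Om))$ (and, in its proof, that $w^*$ attains zero lateral boundary values so that $w^*-v\in L^p(0,T;W^{1,p}_0(\Om))$ can be asserted). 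This global Sobolev membership is not automatic for a bounded weak supersolution --- a priori one only has local integrability of the gradient --- and it is precisely here that the hypothesis $\spt\psi\Subset\Om_T$ is used, via Lemma~\ref{lem:global-int}, which identifies $w^*$ near the lateral boundary with the solution of a boundary value problem and yields $w^*\in L^p(0,T;W^{1,p}_0(\Om))$. Add that citation and your proof is complete; your closing remark on upgrading subsequential to full-sequence convergence (by uniqueness of the limit $w^*$) is a reasonable supplement that the paper leaves implicit.
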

\begin{proof}
 By \corref{cor:supersol}, $u^\eps$ is a weak supersolution and $0\le u^\eps\le L$. \theoref{thm:superparab-compactness} yields a subsequence, still denoted by $u^\eps$, and a limit $u$ such that
$$u^\eps \to u, \quad \nabla u^\eps \to \nabla u\quad \textrm{a.e.\ in}\quad
 \Omega_T$$
as $\eps\to 0$. The function $u$ is a weak supersolution, and we
may even assume it to be $\essliminf$-regularized.
Since $\psi_{\eps} \to \psi$, $u \geq \psi$ almost everywhere, and so we
conclude that
\[
\begin{split}
w^*\leq u ,
\end{split}
\]
because $w^*$ is the least solution.

Let $v^\eps$ be the variational solutions with the mollified obstacles
$w^*_\eps$. Since $w^* \geq \psi,$ also $w^*_{\eps} \geq \psi_{\eps}.$
Due to the assumption $\spt\psi\subset \Om_T$, we see by
Lemma~\ref{lem:global-int} that $w^*\in
L^p(0,T;W^{1,p}_0(\Om))$. By the previous lemma
$$v^\eps \to w^*, \quad \nabla v^\eps \to \nabla w^*\quad \textrm{a.e.\ in}\quad
\Omega_T$$ as $\eps\to 0$, at least for a subsequence.  But now
$w^*_\eps\geq \psi_\eps$ implies that $v^\eps\geq {u^\eps}$ almost
everywhere according to \lemref{lem:ordered-obstacles}.  Thus by
passing to a limit, we have
\[
\begin{split}
w^*\geq u
\end{split}
\]
almost everywhere.
Thus $u=w^*$ almost everywhere.
\end{proof}

We could also have taken a slightly different approach, and used the mollification \eqref{eq:naumann-convolution} in
time and then a mollification analogous to \eqref{eq:friedrichs} in space.
The space mollifications are well defined also near the lateral boundary as we extend the functions by zero outside $\Om$. A good point in this approach is that, since the mollified
obstacles are in $C^\infty$, \lemref{lem:ordered-obstacles} is
immediate. Observe also that, in this approach, we do not assume that the obstacle is in the Sobolev space. Thus for example a characteristic function is an admissible  obstacle.
\begin{theo}
\label{thm:uniqueness-space}
Let $\psi$ be a measurable function such that $\spt\psi\Subset\Om_T,\ 0  \leq
\psi\leq L$, and let $u^{\eps,\sigma}$ be the solutions to the variational
obstacle problems with the time  and space mollified obstacles $(\psi_\eps)_\sigma$.
Let $w^*$ denote the least solution with the obstacle $\psi$. Then
$$u^{\eps,\sigma} \to w^*, \quad \nabla u^{\eps,\sigma} \to \nabla w^*\quad \textrm{a.e.\ in}\quad
\Omega_T.$$
\end{theo}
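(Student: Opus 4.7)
The plan is to mirror the proof of Theorem~\ref{thm:uniqueness} almost verbatim, with the double mollification playing the role that the single (time) mollification played there. The advantage is that the obstacles $(\psi_\eps)_\sigma$ now lie in $C^\infty(\ol\Omega_T)$, so ordering of the corresponding variational solutions is immediate from the smooth obstacle theory of Section~3, with no need for the extra Friedrichs regularization done inside the proof of Lemma~\ref{lem:ordered-obstacles}; this is why no Sobolev regularity on $\psi$ is required.

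First I would check that each $u^{\eps,\sigma}$ is a weak supersolution taking values in $[0,L]$. Since the obstacle is smooth, this follows either from Corollary~\ref{cor:smooth-variational=least} (identifying the variational and least solutions for $C^\infty$ obstacles) or from the argument of Corollary~\ref{cor:supersol}. By Theorem~\ref{thm:superparab-compactness} I extract a subsequence so that $u^{\eps,\sigma}\to u$ and $\nabla u^{\eps,\sigma}\to \nabla u$ a.e., with $u$ a weak supersolution. Choosing the joint limit $(\eps,\sigma)\to 0$ along a suitable diagonal, one has $(\psi_\eps)_\sigma\to\psi$ a.e.\ (first $\sigma\to0$ gives $\psi_\eps$ by standard properties of Friedrichs' mollifier, then $\eps\to 0$ recovers $\psi$ by \lemref{lem:naumann}). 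Hence $u\geq\psi$ a.e., which by definition of the least solution gives $w^*\leq u$ a.e.

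For the opposite inequality I mollify $w^*$ in the same two steps to produce $(w^*_\eps)_\sigma$, and let $v^{\eps,\sigma}$ be the variational solution with this smooth obstacle. Because $w^*\geq \psi$ a.e.\ implies $(w^*_\eps)_\sigma \geq (\psi_\eps)_\sigma$, and because both obstacles are smooth, \corref{cor:continuous-ordered-obstacles} (or the smooth half of \lemref{lem:ordered-obstacles}) gives
$v^{\eps,\sigma}\geq u^{\eps,\sigma}$ a.e. I must then show $v^{\eps,\sigma}\to w^*$ with gradient convergence along a subsequence, which is the analog of \theoref{thm:supersolution-obstacle}. Here I use that $w^*\in L^p(0,T;W^{1,p}_0(\Om))$ globally, which is precisely \lemref{lem:global-int}; its proof (boundary-value extension in $\Om\setminus \ol D$) only uses that $w^*$ is a bounded supersolution and $\spt\psi\Subset\Om_T$, and so applies here.

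Now I test the variational inequality for $v^{\eps,\sigma}$ with its own obstacle $(w^*_\eps)_\sigma$, and test the averaged supersolution inequality \eqref{eq:naumann-zero-rhs} for $w^*$ with the non-negative function $v^{\eps,\sigma}-(w^*_\eps)_\sigma$. Summing the two gives a parabolic analog of \eqref{eq:key-estimate}, namely
\[
\int\!\!\int_{\Om_T}\bigl(\abs{\nabla v^{\eps,\sigma}}^{p-2}\nabla v^{\eps,\sigma} - (\abs{\nabla w^*}^{p-2}\nabla w^*)_\eps\bigr)\cdot\nabla((w^*_\eps)_\sigma - v^{\eps,\sigma})\,\ud x\,\ud t \geq 0.
\]
After inserting $\pm\,\abs{\nabla (w^*_\eps)_\sigma}^{p-2}\nabla (w^*_\eps)_\sigma$ and applying Young's inequality exactly as in \theoref{thm:supersolution-obstacle}, the left side controls $\int\!\!\int_{\Om_T}|\nabla(v^{\eps,\sigma}-(w^*_\eps)_\sigma)|^p$ by a remainder that vanishes thanks to \lemref{lem:naumann} and the standard strong $L^p$-convergence of spatial mollifications. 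Fatou's lemma then yields $\nabla v^{\eps,\sigma}\to\nabla w^*$ in $L^p$ along a subsequence, and consequently $v^{\eps,\sigma}\to w^*$ a.e. Passing to the limit in $v^{\eps,\sigma}\geq u^{\eps,\sigma}$ gives $w^*\geq u$ a.e., finishing the proof when combined with step~2. The main obstacle is the last estimate: one must keep the two mollification parameters synchronized (or iterated) so that the commutator term $(\abs{\nabla w^*}^{p-2}\nabla w^*)_\eps - \abs{\nabla (w^*_\eps)_\sigma}^{p-2}\nabla (w^*_\eps)_\sigma$ remains bounded in $L^q$ independently of the interplay between $\eps$ and $\sigma$, and goes to zero jointly; this is where the specific smoothing properties in \lemref{lem:naumann} are indispensable.
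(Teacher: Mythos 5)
You are following the route the paper itself intends: \theoref{thm:uniqueness-space} is stated without a separate proof, the preceding remark saying precisely that one repeats the scheme of \theoref{thm:uniqueness} (together with the analogue of \theoref{thm:supersolution-obstacle}) for the doubly mollified obstacles, the gain being that \lemref{lem:ordered-obstacles} becomes immediate for smooth obstacles and that no Sobolev regularity of $\psi$ is needed. Your two-sided comparison ($w^*\le u$ from the compactness theorem, $w^*\ge u$ via the solutions $v^{\eps,\sigma}$ with obstacle $(w^*_\eps)_\sigma$, using that \lemref{lem:global-int} still gives $w^*\in L^p(0,T;W^{1,p}_0(\Om))$) is exactly this adaptation.

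One step is asserted too quickly. Adding the variational inequality for $v^{\eps,\sigma}$, tested with its obstacle $(w^*_\eps)_\sigma$, to inequality \eqref{eq:naumann-zero-rhs} for $w^*$, tested with $v^{\eps,\sigma}-(w^*_\eps)_\sigma\ge 0$, does \emph{not} yield your displayed analogue of \eqref{eq:key-estimate}: unlike in \theoref{thm:supersolution-obstacle}, the time terms no longer cancel, since one inequality carries $\parts{(w^*_\eps)_\sigma}{t}$ and the other $\parts{w^*_\eps}{t}$, leaving the extra term
\[
\int\!\!\int_{\Om_T}\bigl(v^{\eps,\sigma}-(w^*_\eps)_\sigma\bigr)\Bigl(\parts{w^*_\eps}{t}-\parts{(w^*_\eps)_\sigma}{t}\Bigr)\ud x\ud t
=\frac{1}{\eps}\int\!\!\int_{\Om_T}\bigl(v^{\eps,\sigma}-(w^*_\eps)_\sigma\bigr)\bigl[(w^*-w^*_\sigma)-(w^*_\eps-(w^*_\eps)_\sigma)\bigr]\ud x\ud t .
\]
This is harmless, but you must say how it is removed: either test \eqref{eq:naumann-zero-rhs} with the spatially mollified test function and move the mollification onto $\bigl(\abs{\nabla w^*}^{p-2}\nabla w^*\bigr)_\eps$ and $w^*_\eps$ by the symmetry of the kernel, so that the cancellation is exact (modulo the same boundary approximation the paper already invokes), or use that the first factor is bounded by $L$ and that the bracket has $L^1$-norm at most $2\norm{w^*-w^*_\sigma}_{L^1(\Om_T)}$, and then take an iterated or diagonal limit with $\sigma=\sigma(\eps)$ chosen so that $\eps^{-1}\norm{w^*-w^*_\sigma}_{L^1(\Om_T)}\to 0$. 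By contrast, the commutator you single out at the end, $\bigl(\abs{\nabla w^*}^{p-2}\nabla w^*\bigr)_\eps-\abs{\nabla(w^*_\eps)_\sigma}^{p-2}\nabla(w^*_\eps)_\sigma$, needs no synchronization at all: both pieces converge to $\abs{\nabla w^*}^{p-2}\nabla w^*$ in $L^q(\Om_T)$ however $\eps,\sigma\to 0$, by \lemref{lem:naumann} and the standard properties of Friedrichs' mollification. So the delicacy sits in the time term, not in the gradient commutator; with that repair your argument is complete and coincides with the paper's intended proof.
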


\section{Special cases}

\label{sec:special-cases}
First, we consider the possibility to extend Definition 3.3 directly to the
irregular case. Needless to say, the variational inequality (1.1)
makes
 sense without the assumption
that the obstacle is continuous. However, the time derivative of the test function is present, and thus we might be led to use smooth or, at least, continuous test functions. We encounter a difficulty. It turns out that such a
restriction on the admissible test functions  destroys the
uniqueness property if the obs\-tacle is too irregular: there are too few test functions to detect the
``true solution''.

 To illustrate this, we consider the elliptic
obs\-tacle problem.  Let $\psi\in W^{1,p}(\Om)$  and recall
\[
\begin{split}
\mathcal K_{\psi}=\{\phi\in W^{1,p}(\Om)\,:\,\phi\geq \psi \trm{ a.e.\ in } \Om,\, \phi-\psi\in W^{1,p}_0(\Om)\}.
\end{split}
\]
Then $w\in \mathcal K_{\psi}$ is a solution to the elliptic obstacle problem if
\begin{equation}
\label{eq:elliptic}
\int_{\Om} \abs{\nabla w}^ {p-2} \nabla w \cdot \nabla
(\phi-w) \ud x \geq 0
\end{equation}
for every $\phi\in \mathcal K_{\psi}$.

Let us begin our discussion with the simplest relevant special case, the Dirichlet integral. Thus $p = 2$, the equation is linear
and stationary. Even here the so-called Lavrentiev Phenomenon,
described in \cite{kilpelainenl95}, enters and will destroy the uniqueness, if continuity is
imposed on the admissible functions. Fix a function $\psi \in W^{1,2}(\Omega)$ and consider the class
$$\mathcal K_{\psi}=\{\phi\in W^{1,2}(\Om)\,:\,\phi\geq \psi \trm{
  a.e.\ in }
 \Om,\, \phi-\psi\in W^{1,2}_0(\Om)\}$$
of admissible functions.
If $\psi$ itself is a superharmonic function,
say $\psi = u$,
it solves the obstacle problem: for all $\phi \in \mathcal K_{u}$
$$\int_{\Omega}|\nabla u|^2\,dx \leq \int_{\Omega}|\nabla \phi|^2\,dx
,$$
or equivalently
$$\int_{\Omega}\nabla u \cdot (\nabla \phi - \nabla u)\,dx \geq 0.$$
According to \cite{kilpelainenl95} there exists a superharmonic function $u \in
W^{1,2}(\Omega)$
such that
$$\int_{\Omega}|\nabla u|^2\,dx < \inf_{\phi} \int_{\Omega}|\nabla
\phi|^2\,dx,$$
where we restrict ourselves to \emph{continuous} functions $\phi$ in $
\mathcal K_{u}$. Notice that the inequality is strict. Thus the true
minimum cannot be reached via continuous admissible functions. This is an
instance of the Lavrentiev Phenomenon. From now on $u$ denotes this function.

There exists another superharmonic
function $w$\, ($w \geq u$ everywhere
 and $w \not = u$ in a subset of positive measure) such that
$$\int_{\Omega}|\nabla w|^2\,dx =  \inf_{\phi} \int_{\Omega}|\nabla
\phi|^2\,dx,$$
where the infimum is taken over all $\phi \in C(\Omega) \cap \mathcal
K_{u}$. Also a.e.\
\begin{equation}
 w = \widehat{\inf v},
\end{equation}
where the infimum is taken over all \emph{continuous} superharmonic
functions $v$ such that $v \geq u$ a.e. in $\Omega$.

Now
$$\int_{\Omega} \nabla u \cdot \nabla (\phi - u)\,dx \geq 0$$
for all $\phi \in \mathcal K_{u}$ and \emph{a fortiori} for all
  $\phi \in  C(\Omega) \cap \mathcal K_{u}$.  We also have
$$\int_{\Omega} \nabla w \cdot \nabla (\phi - w)\,dx \geq 0$$
for all $\phi \in \mathcal K_{w}$.
We claim that this also holds  for
all  $\phi \in  C(\Omega) \cap \mathcal K_{u}$, where the class of test functions is now defined using $u$. To see this, notice
that
\[
\begin{split}
\int_{\Omega}& \nabla w \cdot \nabla (\phi - w)\,dx\\
 &= \int_{\Omega} \nabla w \cdot \nabla (\max(\phi,w) - w)\,dx
+\int_{\Omega} \nabla w \cdot \nabla (\min(\phi,w) - w)\,dx\\
&\geq 0 +\int_{\{\phi < w\}} \nabla w \cdot \nabla (\phi - w)\,dx.
\end{split}
\]
The set $\{\phi < w\}$ is open, and in any case $\phi \geq u.$
Therefore one can conclude that $w$, in fact, is a harmonic function
in this open set. {\small To see this, fix a point in this set. In a sufficiently
  small ball centered at this point, we can replace $w$ by the
  harmonic function with the boundary values $w$ on the sphere (this
  is given by Poisson's integral) without touching $\phi$; the local
  Poisson modification lies above $u$. If we now perform the same
  construction on each of the continuous superharmonic functions, the
  infimum of which appears in (5.2), we notice that locally $w$ is
  the limit of harmonic functions.} Thus the last integral is zero. This proves the claim.

The consequence of this construction is that the variational inequality
$$ \int_{\Omega} \nabla v \cdot \nabla (\phi - v)\,dx \geq 0$$
has (at least) two solutions in the class $ \mathcal K_{u}$, if merely
\emph{continuous} functions $\phi$ in  $ \mathcal K_{u}$ are
admissible. The solutions exhibited are $u$ and $w$. However, if $\phi$ runs
through the whole class  $ \mathcal K_{u}$, then $u$ is the unique
solution.

The same phenomenon occurs for the problem
$$\int_{\Omega}|\nabla v|^{p-2}\nabla v \cdot \nabla(\phi - v)\,dx
\geq 0.$$
 Using an obstacle of the form
$u(x,t) = u(x)$ we get \textsf{ a counterexample to uniqueness  for the parabolic case}, if
the admissible functions are required to be continuous.

In the light of the previous calculation, testing with
smooth functions is insufficient to obtain uniqueness  even in the elliptic case. On the
other hand, \eqref{eq:smooth} does not make sense if the test functions
have poor regularity in the time direction. This is the difficulty.

Next we consider two special  cases: upper semicontinuous
obstacles, including characteristic functions of compact sets, and
lower semicontinuous obstacles.

First, we observe that with the characteristic function $\chi_K$  of a
compact set $K$ as an obstacle, $w^*$  is $p$-parabolic and,  in
particular, continuous   in
$\Om_T\setminus \ol K$ by \lemref{lem:global-int}.
\begin{lem}\label{lem:global-int-characteristic}
Let $K\subset \Om_T$ be a compact set, and let $w^*$ be the least solution with the obstacle  $\chi_K$. Then
$w^*$ is $p$-parabolic  in $\Om_T\setminus K$. Moreover, $w^* \in L^p(0,T;W^{1,p}(\Om))$.
\end{lem}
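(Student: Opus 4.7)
The plan is to mimic the proof of \lemref{lem:global-int}, the only modification being that the obstacle $\chi_K$ does not belong to any Sobolev space. The key observation that lets the argument go through is that $K \Subset \Om_T$ and $\chi_K \equiv 0$ on $\Om_T \setminus K$, together with the trivial bound $0 \leq w^* \leq 1$. Indeed, the constant function $1$ is a supersolution lying above $\chi_K$, so it belongs to $\mathcal{S}_{\chi_K}$; and every $u \in \mathcal{S}_{\chi_K}$ satisfies $u \geq \chi_K \geq 0$ a.e., hence $w^* \geq 0$ after $\essliminf$-regularization.

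First I would establish the $p$-parabolicity in $\Om_T \setminus K$. Fix $(x_0,t_0) \in \Om_T \setminus K$. Since $K$ is compact and $\Om_T \setminus K$ is open, pick a cylinder $Q=Q_r(x_0,t_0) \Subset \Om_T \setminus K$ and let $w_P$ denote the Poisson modification of $w^*$ in $Q$. On $\Om_T \setminus Q$ we have $w_P = w^* \geq \chi_K$ a.e. On $Q$ the obstacle vanishes, and since $w^* \geq 0$ on $\partial_p Q$, the Comparison Principle applied to $w_P$ (which is $p$-parabolic in $Q$) and the subsolution $0$ gives $w_P \geq 0 = \chi_K$ in $Q$. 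Hence $w_P \in \mathcal{S}_{\chi_K}$, so $w^* \leq w_P$; combined with the trivial $w_P \leq w^*$ this forces $w^* = w_P$ in $Q$, and $w^*$ is $p$-parabolic there.

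Next I would prove the global integrability. Using $K \Subset \Om_T$, pick a smooth open set $D \subset \R^n$ with $\ol D \subset \Om$ and times $0 < t_1 < t_2 < T$ such that $K \Subset D\times(t_1,t_2)$, so that $\chi_K \equiv 0$ on the ``outer shell'' $(\Om \setminus \ol D) \times (0,T)$. Since $w^*$ is $p$-parabolic there by the previous step and is continuous, it has well-defined boundary values on $\partial D \times (0,T)$. Solve the $p$-parabolic equation \eqref{eq:weak-sol} in $(\Om\setminus \ol D)\times(0,T)$ with the boundary data $w^*$ on $\partial D \times (0,T)$ and $0$ on $(\Om\setminus \ol D)\times\{0\}$ and on $\partial \Om \times (0,T)$. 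Exactly as in the proof of \lemref{lem:global-int}, the elliptic-type comparison argument of \cite{lindqvistm07}, \cite{kortekp10} forces this solution to coincide with $w^*$ on the outer shell, which produces zero boundary values on $\partial \Om \times (0,T)$ in the Sobolev sense and a uniform $L^p(0,T;W^{1,p}(\Om\setminus \ol D))$-bound for $w^*$. On the interior region $D \times (0,T)$, boundedness of $w^*$ together with the standard Caccioppoli estimate for weak supersolutions (applied on slightly enlarged cylinders inside $\Om_T$) supplies the local gradient bound. Gluing the two pieces along $\partial D \times (0,T)$ yields $w^* \in L^p(0,T;W^{1,p}(\Om))$.

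The only delicate point is the second step: one must verify that the Dirichlet problem with boundary datum $w^*$ on $\partial D \times (0,T)$ is well posed and that its solution really equals $w^*$ on the outer shell. This is handled precisely as in \lemref{lem:global-int}, where the continuity of $w^*$ in $\Om_T \setminus K$ and the comparison principle do the work; nothing about the Sobolev regularity of $\chi_K$ is used, only that $\chi_K$ vanishes on the outer shell and that $0 \leq w^* \leq 1$.
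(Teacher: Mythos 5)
Your proposal is correct and takes essentially the same route as the paper, which establishes \lemref{lem:global-int-characteristic} simply by invoking the argument of \lemref{lem:global-int}: Poisson modification in cylinders $Q\Subset\Om_T\setminus K$ for the $p$-parabolicity, and the auxiliary boundary value problem in the outer shell $(\Om\setminus\ol D)\times(0,T)$ combined with the comparison principle for the global Sobolev bound, the key point being that only $0\le\chi_K\le 1$ and the vanishing of the obstacle outside $K$ are used, never its Sobolev regularity. Your explicit Caccioppoli estimate for the bounded supersolution $w^*$ on the inner region $D\times(0,T)$ merely spells out a detail the paper leaves implicit, and it is valid.
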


Let us now consider a \emph{lower} semicontinuous obstacle and
approximate it pointwise from \emph{below} by smooth
functions. Solving the corresponding
 obstacle problems we obtain the least solution as a limit,  cf.\ Corollary~\ref{cor:smooth-variational=least}.
 Needless to say, this is no surprise.
\begin{prop}
\label{prop:lsc}
Suppose that the obstacle $\psi$, $0\leq \psi\leq L$, is lower semicontinuous in $\ol \Om_T$ and let $\psi_i$ be an increasing sequence of smooth functions so that
\[
\begin{split}
\psi_i\to \psi
\end{split}
\]
pointwise.  Let $u_i$ be the variational solutions with the obstacles $\psi_i$, and let $w^*$ be the least solution with the obstacle $\psi$. Then
 \[
  u_i\to w^*, \quad \nabla u_i\to \nabla w^* \textrm{ a.e.\ in}\quad \Om_T.
    \]
\end{prop}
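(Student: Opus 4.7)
The strategy is to show by a sandwich argument that the monotone pointwise limit of $u_i$ coincides almost everywhere with $w^*$.

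\emph{Monotonicity, boundedness, and compactness.} Each $\psi_i$ is smooth, so (by the extension of \corref{cor:smooth-variational=least} alluded to after its proof) the variational solution $u_i$ equals the least solution $w_i^*$ with obstacle $\psi_i$, and in particular $u_i$ is a weak supersolution. Since $\psi_i \leq \psi_{i+1}$, the comparison argument of \corref{cor:continuous-ordered-obstacles} gives $u_i \leq u_{i+1}$ almost everywhere, while comparison with the constant supersolution $L$ yields $0 \leq u_i \leq L$. Thus $u_i \uparrow u$ almost everywhere for some measurable $u$. Applying \theoref{thm:superparab-compactness} to the bounded sequence $(u_i)$ of weak supersolutions, we extract a subsequence with $u_{i_k} \to \tilde u$ and $\nabla u_{i_k} \to \nabla \tilde u$ almost everywhere, where $\tilde u$ is a weak supersolution. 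Monotone convergence forces $\tilde u = u$; hence $u$ is a weak supersolution. Running the same extraction on an arbitrary subsequence of $(\nabla u_i)$ produces a sub-subsequence converging to the same $\nabla u$, so the full sequence $\nabla u_i$ converges to $\nabla u$ almost everywhere.

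\emph{Sandwich.} For the upper bound $u \leq w^*$ a.e., observe that $w^* \geq \psi \geq \psi_i$ a.e., so $w^*$ is (up to its $\essliminf$-regularization, with which it agrees everywhere by \lemref{lem:regularizations-coincide}) an admissible element of $\mathcal S_{\psi_i}$; by the minimality of $w_i^*$ we obtain $w^* \geq w_i^* = u_i$, and letting $i \to \infty$ gives $w^* \geq u$ a.e. For the lower bound, $u \geq u_i \geq \psi_i$ for every $i$, and since $\psi_i \uparrow \psi$ pointwise we get $u \geq \psi$ a.e. The $\essliminf$-regularization $u^*$ of the supersolution $u$ coincides with $u$ a.e.\ (by Kuusi's theorem recalled in Section~2) and satisfies $u^* \geq \psi$ a.e., hence $u^* \in \mathcal S_\psi$ and consequently $w^* \leq u^* = u$ a.e. Combining the two inequalities yields $u = w^*$ a.e., which together with the gradient convergence established in the previous paragraph proves the claim.

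\emph{Main obstacle.} The only delicate point is the order-preservation $u_i \leq u_{i+1}$: \corref{cor:continuous-ordered-obstacles} is phrased for $C_0(\Omega_T)$ obstacles, whereas the natural choice of smooth approximants $\psi_i$ need only be bounded. One must therefore verify that the comparison principle for least solutions extends to bounded smooth obstacles, which is a minor adaptation of the argument already given. Everything else reduces to the compactness theorem and the definitional property of $w^*$.
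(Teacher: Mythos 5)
Your proof is correct and follows essentially the same route as the paper: both arguments sandwich the limit between $w^*$ and a supersolution lying above $\psi$, using \theoref{thm:superparab-compactness} for the convergence and the defining minimality of the least solution for the bound $w^*\le u$. The only real difference is in the upper bound $u_i\le w^*$: the paper argues by comparison on the non-contact set $\{u_i>\psi_i\}$, where $u_i$ is $p$-parabolic and equals $\psi_i$ on its boundary, while you identify $u_i$ with the least solution $w_i^*$ for the smooth obstacle (via the extension of \corref{cor:smooth-variational=least} beyond $C_0$-obstacles, which the paper explicitly sanctions and itself uses in \lemref{lem:ordered-obstacles}) and then invoke minimality of $w_i^*$ against the admissible competitor $w^*\in\mathcal S_{\psi_i}$; both devices are legitimate and of comparable length. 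Two small remarks. First, since the smooth approximants $\psi_i$ may dip below zero, the bound $u_i\ge 0$ is not automatic; only uniform boundedness (e.g.\ $\min_{\ol\Om_T}\psi_1\le u_i\le L$) is needed for \theoref{thm:superparab-compactness}, so nothing is lost. Second, your sub-subsequence argument does not literally give a.e.\ convergence of the full gradient sequence: a.e.\ convergence is not induced by a topology, so "every subsequence has a further a.e.-convergent subsequence with the same limit" only yields local convergence in measure. The paper's own proof is no more precise here (it simply invokes the subsequential \theoref{thm:superparab-compactness}), so this is not a gap relative to the paper, but you should either state the gradient convergence along a subsequence or justify the full-sequence claim differently rather than by the subsequence principle.
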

\begin{proof}
This is a simple consequence of a comparison principle because it implies $u_i\leq w^*$, and on the other hand, clearly for the limit $u$ it holds that $\psi\leq u$. Since by our convergence results $u$ is a supersolution, $w^*\leq u$.

To be more precise, since $\psi_i$ is smooth, it follows that
$u_i=\psi_i$ at the boundary of the open set $\{u_i>\psi_i\}$ and
$u_i$ is $p$-parabolic in the set $\{u_i>\psi_i\}$. Furthermore, $w^*\geq \hat
\psi_i=\psi_i$ and,  due to the comparison principle, $u_i\leq
w^*$ in the set $\{u_i>\psi_i\}$.

The convergence of $u_i$ to some limit $u$ follows from \theoref{thm:superparab-compactness}. Since the reasoning above was independent of
$i$, it follows that $u\leq w^*$ in the whole domain. On the other hand, $u_i$ is an
increasing and bounded sequence and, clearly, $u\geq
\psi$. Therefore, the limit $u$ is a supersolution above $\psi$. It follows
that $w^*=u$ almost everywhere.
\end{proof}

\textsf{Counterexample:} The situation is not symmetric. A similar statement is clearly false for an
approximation of an  upper semicontinuous obstacle $\psi$  by smooth
functions from above, when one uses the  variational solutions for the
corresponding obstacle problems. To see this, take
\[
\begin{split}
\psi(x,t)=\begin{cases}
1,&(x,t)\in \Om\times \{\frac{T}{2}\}\\
0,&\trm{otherwise},
\end{cases}
\end{split}
\]
as an obstacle. (Further, one can define $\psi$ as zero near the
lateral boundary, so that it has compact support. This has no
bearing.) This $\psi = 0$ a.e., so clearly the least solution is identically zero, but
an approximation of $\psi$ from above produces a supersolution $u$
that is not identically zero. Indeed, one has the minorant
\[
\begin{split}
v(x,t)=\begin{cases}
0,& t \leq  \frac{T}{2}\\
h(x,t),& t > \frac{T}{2},
\end{cases}
\end{split}
\]
where $h$ is the $p$-parabolic function in $\Omega \times
(\frac{T}{2},T)$ with initial values $1$ at $t = T/2$ and lateral boundary
values 0.

  Notice also that both $u$
and $\psi$ satisfy  \defiref{def:smooth} when testing with continuous
test functions \emph{everywhere} above the obstacle, so clearly
\textsf{uniqueness fails} with these test functions.
It is $u$ that is the variational  solution resulting from the approximation procedure, because it is plain that $\psi_{\eps} = 0$. Thus it is also the least
solution. For the non-uniqueness
it was essential to use continuous test functions  satisfying $\phi\geq \psi$ at
\emph{each} point, although $\psi$ is discontinuous.

The example also shows that \textsf{the convolutions} $\psi_{\eps}$ \textsf{cannot be
replaced} (in \theoref{thm:uniqueness}) \textsf{by arbitrary smooth obstacles}, say $\psi_{j}$ converging to
$\psi$ in the Sobolev space $L^p(0,T;W^{1,p}(\Omega))$.

\medskip
As we already have pointed out, the theory of thin obstacles
is outside the scope of our work, see \cite{petersson06}. However, we include the following
considerations.
 If we strengthen  \emph{almost everywhere} in the definition of a
 least solution to the requirement that the inequalities hold at \emph{each}
 point, then we can avoid the phenomenon in the
 counterexample. However, we must restrict ourselves to a
 \emph{semicontinuous obstacle} in this situation.

Thus we temporarily use
 the smaller class
\begin{equation}
\label{eq:everywhere-above}
\begin{split}
\mathcal S_{\psi}^{\#} =\{&u\,:\, u\textrm{ is $\essliminf$-regularized weak supersolution},\\
&\hspace{14 em}u\geq\psi \,\, \textrm{at each point}\}.
\end{split}
\end{equation}
to define the function $w^*_{\#}.$
Instead, we then obtain the following result.
\begin{prop}
\label{prop:usc}
Suppose that the obstacle $\psi$, $0\leq \psi\leq L$, is upper semicontinuous in $\ol \Om_T$ and define the least solution $w^*_{\#}$, using \eqref{eq:everywhere-above}. Further, let $\psi_i$ be a decreasing  sequence of smooth obstacles so that
\[
\begin{split}
\psi_i\to \psi
\end{split}
\]
pointwise. Then for the  variational solutions $u_i$ with the obstacles $\psi_i$,  it holds that
 \[
  u_i\to w^*_{\#}, \quad
  \nabla u_i\to \nabla w^*_{\#} \quad \textrm{a.e.\ in}\quad \Om_T.
    \]
\end{prop}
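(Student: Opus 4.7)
The plan is to follow the pattern of Proposition~\ref{prop:lsc}, but with the inequalities reversed and with a Dini--Cartan compactness argument replacing the elementary comparison step. First I observe that each $u_i$ is continuous on $\Omega_T$ by Theorem~\ref{thm:continuous}, hence coincides with its own $\essliminf$-regularization, is a weak supersolution by Corollary~\ref{cor:smooth-variational=least}, and satisfies $u_i \geq \psi_i \geq \psi$ at every point; thus $u_i \in \mathcal{S}_\psi^{\#}$. The pointwise ordering $\psi_i \geq \psi_{i+1}$ together with Corollary~\ref{cor:continuous-ordered-obstacles} gives $u_1 \geq u_2 \geq \dots$, so the pointwise limit $\tilde u = \lim_i u_i$ exists everywhere, while Theorem~\ref{thm:superparab-compactness} extracts a subsequence along which $u_i \to u$ and $\nabla u_i \to \nabla u$ almost everywhere with $u$ a weak supersolution; necessarily $u = \tilde u$ a.e.

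The easy half is the lower bound: since $u_i \in \mathcal{S}_\psi^{\#}$, one has $u_i \geq w_{\#}$ at every point, hence $\tilde u \geq w_{\#} \geq w^{*}_{\#}$ almost everywhere.

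The heart of the argument is the reverse inequality $\tilde u \leq w^{*}_{\#}$ a.e. Here I would fix an arbitrary $v \in \mathcal{S}_\psi^{\#}$ and any $\eps > 0$, and establish that $v + \eps \geq u_i$ a.e.\ for all $i$ sufficiently large. The crucial observation is a Dini--Cartan type fact on the compact set $\overline{\Omega_T}$: the sets
\[
K_i = \{(x,t) \in \overline{\Omega_T} : \psi_i(x,t) - v(x,t) \geq \eps\}
\]
are closed (continuous minus lower semicontinuous is upper semicontinuous), decreasing in $i$, and have empty intersection since $\psi_i \searrow \psi \leq v$ pointwise. Cantor's intersection theorem yields some $N$ with $\psi_i \leq v + \eps$ on $\overline{\Omega_T}$ for every $i \geq N$. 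Then $v + \eps$ is an $\essliminf$-regularized weak supersolution lying above $\psi_i$, so $v + \eps \in \mathcal{S}_{\psi_i}$; since $u_i$ coincides with the least solution for the smooth obstacle $\psi_i$ by Corollary~\ref{cor:smooth-variational=least}, this forces $v + \eps \geq u_i$ a.e. Letting $i \to \infty$ and then $\eps \to 0$ gives $v \geq \tilde u$ a.e., and taking the infimum over $v \in \mathcal{S}_\psi^{\#}$ yields $w_{\#} \geq \tilde u$ a.e.

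Combining the two bounds gives $\tilde u = w_{\#} = w^{*}_{\#}$ almost everywhere, and the gradient convergence is already provided by Theorem~\ref{thm:superparab-compactness}. I expect the Dini--Cartan step to be the main obstacle: it is precisely there that the upper semicontinuity of $\psi$ and the ``at each point'' strengthening built into $\mathcal{S}_\psi^{\#}$ are used in an essential way, and the counterexample preceding the proposition confirms that without these ingredients the statement must fail (since smooth approximation from above would otherwise produce a strict majorant of $w^*$, as in Proposition~\ref{prop:lsc}'s failed mirror image).
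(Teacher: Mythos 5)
Your overall architecture is genuinely different from the paper's (the paper extracts a Choquet sequence of lsc supersolutions decreasing to $w^*_{\#}$, interposes \emph{continuous} obstacles between the usc $\psi$ and these supersolutions via Hahn's theorem, solves continuous obstacle problems, approximates those uniformly by smooth obstacles with two diagonalizations, and only reaches the given sequence $\psi_i$ at the very end by a Dini-type comparison), and the easy half of your argument ($u_i\in\mathcal S^{\#}_{\psi}$, hence $u_i\ge w^*_{\#}$, plus the compactness theorem) is fine. The genuine gap is in your Dini--Cartan step. The sets $K_i=\{\psi_i-v\ge\eps\}$ are only defined, and only relatively closed, in the \emph{open} cylinder $\Om_T$: an element $v\in\mathcal S^{\#}_{\psi}$ is a supersolution in $\Om_T$, lower semicontinuous there, with $v\ge\psi$ known only at points of $\Om_T$; it has no values on $\partial\Om_T$ and no inequality against $\psi$ there. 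Consequently the compactness of $\overline{\Om_T}$ is not at your disposal: the $K_i$ may cluster at $\partial\Om_T$ with all of them nonempty, and Cantor's intersection theorem does not produce an $N$ with $K_N=\emptyset$. This is not a removable technicality. If $\psi$ is allowed to be large on the boundary (compatible with upper semicontinuity on $\overline{\Om_T}$, e.g.\ $\psi=L$ on $\partial\Om_T$ and $\psi=0$ in $\Om_T$), then $v\equiv0$ belongs to $\mathcal S^{\#}_{\psi}$ while every $\psi_i\ge L$ on the lateral boundary, so $\psi_i\le v+\eps$ fails in $\Om_T$ for every $i$; in that situation the $u_i$ carry boundary data $\ge L$ and do not converge to $w^*_{\#}\equiv0$ at all. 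So boundary control must enter the proof from somewhere; in the paper it comes precisely from the Hahn interposition, which puts continuous obstacles above $\psi$ on all of $\overline{\Om_T}$.

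The repair is to use the standing support hypothesis of Section~4, $\spt\psi\Subset\Om_T$ (which the paper's proof also needs implicitly for the Hahn step). Choose an open $U$ with $\spt\psi\subset U\Subset\Om_T$. On the compact set $\overline U\subset\Om_T$ your argument works verbatim: there $v$ is defined, lsc, and $\ge\psi$ at each point, so the closed sets $K_i\cap\overline U$ decrease with empty intersection and some $K_N\cap\overline U=\emptyset$. On $\overline{\Om_T}\setminus U$ one has $\psi\equiv0$, so $\psi_i\searrow0$ is a decreasing sequence of continuous functions with continuous limit on a compact set, and classical Dini gives $\psi_i\le\eps\le v+\eps$ there for large $i$ (using $v\ge0$, which follows from $v\ge\psi\ge0$ a.e.\ and the regularization). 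With this splitting your comparison $v+\eps\ge u_i$ a.e.\ goes through, the upper bound $\tilde u\le w_{\#}$ follows, and the rest of your proof stands; as written, however, the assertion ``$K_i\subset\overline{\Om_T}$ closed with empty intersection'' is unjustified. A smaller point: Corollaries 3.8 and 3.9 are stated for $C_0(\Om_T)$ obstacles, so you should note, as the paper does, that they extend to smooth obstacles on $\overline\Om_T$ before applying them to the $\psi_i$.
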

\begin{proof}
The idea in the proof is to extract, by the definition of the least solution, a decreasing  sequence of lower semicontinuous supersolutions converging to $w^*_{\#}$. By lower semicontinuity of these supersolutions and upper semicontinuity of the obstacle, there exists a continuous obstacle in between. This yields a sequence of continuous solutions, and upon a second approximation procedure by smooth obstacles, we can pass to a sequence of smooth solutions.

Next we work out the details. The proof of \theoref{thm:obstacle-sol-supersol} yields a sequence $v_i$, $v_i\geq \psi$,
 of $\essliminf$-regularized supersolutions converging almost everywhere to  $w^*_{\#}$. Since $\psi$ is upper semicontinuous and $v_i$ lower semicontinuous, there exists a continuous $\tilde \psi_i$ in $\ol \Om_T$ such that
\[
\begin{split}
\psi\leq\tilde \psi_i\leq v_i
\end{split}
\]
as shown in~\cite{hahn17}.
Denote the continuous least solutions with the obstacles $\tilde \psi_i$ by $\tilde u_i$. It follows that
\[
\begin{split}
\tilde u_i\to w^*_{\#}
\end{split}
\]
 almost everywhere because it immediately follows that $w^*_{\#} \leq \tilde u_i\leq v_i$. Further, \theoref{thm:superparab-compactness} implies the convergence of the gradients.

Remember that $\tilde u_i$ is continuous, and choose for every index $i$
a decreasing sequence $\psi^i_j$ of smooth obstacles such that
\[
\begin{split}
\psi^i_j\to \tilde u_i
\end{split}
\]
uniformly as $j\to \infty$.  Fix $\eps>0$ and choose a $\psi^i_{j}$
such that $\tilde u_i +\eps\geq\psi^i_{j}$. Thus $j =
j(i,\eps)$. Denote by $u^i_{j}$  the variational solution with the obstacle
$\psi^i_{j}$. Since $\tilde u_i+\eps\geq \psi^i_{j}$ and $\tilde
u_i+\eps$ is a continuous supersolution, it follows
by comparison that
\[
\begin{split}
\tilde u_i+\eps\geq u^i_{j}\geq \psi^i_j\geq \tilde u_i.
\end{split}
\]
 By a diagonalization argument, we can extract a subsequence of smooth obstacles so that the related
 solutions converge to some $u$ such that $w^*_{\#}+\eps\geq
 u\geq w^*_{\#}$ almost everywhere. By letting  $\eps\to 0$  via a
 subsequence $\eps_{k}$ and diagonalizing once more, we can extract a
 new subsequence $\psi'_k$ with corresponding solutions $u'_k$, converging to $w^*_{\#}$ in the sense of the claim.

To finish the proof, it is enough to notice that for any $\delta>0$ and $\psi'_k$, it holds for all $j$ large enough that  $\psi_j \leq \psi'_k+\delta$, where $\psi_j$ refers to the sequence in the statement of the proposition.
\end{proof}

\medskip
{\bf Acknowledgements.} A preliminary version of this work was
accomplished in May 2007 while M.P.\ visited the Norwegian University
of Science and Technology. The authors are
grateful to Tero Kilpeläinen for useful discussions and to Giuseppe
Mingione for his hospitality during 'Nonlinear Problems in PDEs'
-Intensive Research Period at Parma in 2010.


\def\cprime{$'$} \def\cprime{$'$}

\end{document}